\renewcommand\baselinestretch{1.33}
\begin{document}
\title [Harmanci Injectivity of Modules]{Harmanci Injectivity of Modules}

\author{Burcu Ungor}
\address{Burcu Ungor, Department of Mathematics, Ankara University, 06100, Ankara, Turkey}
\email{bungor@science.ankara.edu.tr}



\date{}

\newtheorem {thm}{Theorem}[section]
\newtheorem{lem}[thm]{Lemma}
\newtheorem{prop}[thm]{Proposition}
\newtheorem{cor}[thm]{Corollary}
\newtheorem{df}[thm]{Definition}
\newtheorem{nota}{Notation}
\newtheorem{note}[thm]{Remark}
\newtheorem{ex}[thm]{Example}
\newtheorem{exs}[thm]{Examples}
\newtheorem{rem}[thm]{Remark}
\newtheorem{quo}[thm]{Question}
\newcommand{\ov}{\overline{Z}}
\renewcommand\baselinestretch{1.3}
\begin{abstract} In this paper, we are interested in a class of modules partaking in the hierarchy of injective and cotorsion modules,
so-called Harmanci injective modules, which turn out by the
motivation of relations among the concepts of injectivity,
flatness and cotorsionness. We give some characterizations and
properties of this class of modules. It is shown that the class of
all Harmanci injective modules is enveloping, and forms a perfect
cotorsion theory with the class of modules whose character modules
are Matlis injective. One of the main objectives we pursue is to
know when the injective envelope of a ring as a module over itself
is a flat module.

 \vspace{2mm}
\noindent {\bf2010 MSC:}  16D10, 16D40, 16D50, 16E30

 \noindent {\bf Keywords}: Injective module, Matlis injective module, Harmanci
injective module, cotorsion module, flat module, character module,
envelope

\end{abstract}

\maketitle
\section{ Introduction }
Throughout this paper $R$ denotes an associative ring with
identity and modules are unitary $R$-modules.  The notion of
cotorsion abelian groups introduced by Harrison in \cite{Harison},
that is, an abelian group $G$ is called {\it cotorsion} if
Ext$^1_{\Bbb Z}(\Bbb Q, G) = 0$. This notion extended to modules
by Enochs in \cite{Enochs}, namely, a right $R$-module $M$ is said
to be  {\it{\rm(}Enochs{\rm)}  cotorsion} if Ext$^1_R(F, M) = 0$
for every flat right $R$-modules $F$. Let $\mathcal{A}$ be a class
of right $R$-modules. Then $\mathcal{A}^\perp =\{M_R \mid
\mbox{Ext}^1_R(A,M)=0, A\in \mathcal{A}\}$ is called the {\it
right orthogonal class} of $\mathcal{A}$ and $^\perp\mathcal{A}
=\{M_R \mid \mbox{Ext}^1_R(M,A)=0, A\in \mathcal{A}\}$ is called
the {\it left orthogonal class} of $\mathcal{A}$ (see
\cite[p.29]{X}). For any  classes of right $R$-modules
$\mathcal{A}$ and $\mathcal{B}$, if
$\mathcal{A}=\hspace{0.01cm}^\perp\mathcal{B}$ and
$\mathcal{B}=\mathcal{A}^\perp$, then the pair
$(\mathcal{A},\mathcal{B})$ is called a {\it cotorsion theory}. It
is well known that $(\mathcal{F}, \mathcal{EC})$ is a cotorsion
theory where $\mathcal{F}$ and $\mathcal{EC}$ are the classes of
all flat modules and all cotorsion modules, respectively.

The connection between a flat module and its character module was
observed by Lambek. He proved in \cite{Lambek} that a left
$R$-module $M$ is flat if and only if its character module
Hom$_{\Bbb Z}(M,\Bbb Q/\Bbb Z)$ is an injective right $R$-module.
Thus the class of all cotorsion modules is the right orthogonal
class of modules whose character modules are injective. Injective
modules, cotorsion modules and various generalizations of these
modules have been investigated in the literature by many authors.
In \cite{ES}, a right $R$-module $M$ is called {\it Whitehead} if
Ext$^1_R(M,R)=0$. As a dual notion of Whitehead modules, Yan
defined Matlis injective modules in \cite{Ya}, namely, a right
$R$-module $M$ is said to be {\it Matlis injective} if Ext$^1_
R(E(R_R),M) = 0$ where $E(R_R)$ denotes the injective envelope of
the ring $R$ as a right $R$-module. Motivated by the studies on
the modules which belong to the right orthogonal class of flat
modules, i.e., cotorsion modules, the goal of this paper is to
provide an initial contribution to the study of the right
orthogonal class of modules whose character modules are Matlis
injective. We study the behavior of modules that belong to this
right orthogonal class, so-called Harmanci injective modules. We
observe that the pair consisting of all modules whose character
modules are Matlis injective and all Harmanci injective modules is
a cotorsion theory. We are interested in the hierarchy of
injective modules and cotorsion modules, in this direction, we
show that the class of Harmanci injective modules lies strictly
between the classes of injective modules and cotorsion modules.
For a commutative Noetherian ring $R$, $E(R)$ being flat is
characterized in \cite[Theorem 5.1.3]{X}. A natural question
arises: When is $E(_RR)$ a flat left $R$-module for any ring $R$?
One of our main concerns is this question. We give an answer, that
is, the notions of injectivity and Harmanci injectivity coincide
if and only if $E(_RR)$ is flat. It is also known that Hom$_{\Bbb
Z}(M, \Bbb Q/\Bbb Z)$ is always pure injective and so cotorsion
for any module $M$ (see \cite[p.39]{X}). As an application, we
deal with character modules and also approximations of modules in
terms of Harmanci injectivity.

Let $\mathcal{C}$ be a class of right $R$-modules and $M$ a right
$R$-module. Following \cite{Eno}, a homomorphism $f\colon
C\rightarrow M$ with $C\in \mathcal{C}$ is said to be a {\it
$\mathcal{C}$-precover} of $M$ if for any homomorphism $g\colon
C'\rightarrow M$ with $C'\in \mathcal{C}$, there exists a
homomorphism $h\colon C'\rightarrow C$ such that $fh=g$. The
$\mathcal{C}$-precover $f$ is  called a {\it $\mathcal{C}$-cover}
of $M$ if any endomorphism $\alpha\colon C\rightarrow C$ with
$f\alpha = f$ is an isomorphism. The concepts of a
$\mathcal{C}$-preenvelope and $\mathcal{C}$-envelope are defined
dually. Bican, Bashir and Enochs proved the existence of a flat
cover and a cotorsion envelope for any module in \cite{BBE}. It is
also well known that every module has an injective envelope. Thus
the following question seems natural to wonder: what can we say
about the existence of Harmanci injective envelopes and covers
with respect to the class of modules whose character modules are
Matlis injective and their unique mapping properties? In
\cite{Ding}, a $\mathcal{C}$-envelope $f\colon M\rightarrow C$
with $C\in \mathcal{C}$ of a module $M$ said to have the {\it
unique mapping property} if for any homomorphism $g\colon
M\rightarrow C'$ with $C'\in \mathcal{C}$, there exists a unique
$h\colon C\rightarrow C'$ such that $hf=g$.

Briefly, we devote the first part of this paper to study another
class of modules, so-called Harmanci injective modules, which turn
out in the light of relations among the concepts of injectivity,
flatness and cotorsionness by addressing several aforementioned
questions. We devote the second part of this paper to investigate
the existence of Harmanci injective envelopes, its unique mapping
property and a cotorsion theory arising from Harmanci injectivity.

In what follows, $\Bbb Q$, $\Bbb Z$, $\Bbb Z/n\Bbb Z$ and $E(M)$
denote the ring of rational numbers, the ring of integers, the
$\Bbb Z$-module of integers modulo $n$ for a positive integer $n$
and the injective envelope of a module $M$, respectively. Also,
id$(M)$, pd$(M)$ and fd$(M)$ stand for the injective dimension,
projective dimension and flat dimension, respectively.

\section{Harmanci Injective Modules}

In this section, we consider the right orthogonal class of modules
whose character modules are Matlis injective, and call any module
in this class as Harmanci injective. Let us start the following
example for a ring $R$ whose its injective envelope $E(_RR)$ is
not a flat left $R$-module.

\begin{ex}\label{inj hull-ex}{\rm  Let $D$ be a division ring, $n$ be a positive integer,
$U_n(D)$ be the ring of $n\times n$ upper triangular matrices over
$D$ and $M_n(D)$ be the left $U_n(D)$-module of $n\times n$ full
matrices over $D$. Then the injective envelope of $U_n(D)$,
considered as a left $U_n(D)$-module, is $M_n(D)$ which is not
flat.}
\end{ex}

\begin{proof} The left $U_n(D)$-module $M_n(D)$ being an injective
envelope of $U_n(D)$ is proved in \cite{BBB}. Next we claim that
$M_n(D)$ is not a flat left $U_n(D)$-module. We prove it by
contradiction. Assume that $M_n(D)$ is a flat left
$U_n(D)$-module. Note that $M_n(D)$ is finitely generated over the
Noetherian  ring $U_n(D)$, so is finitely presented by
\cite[Corollary 3.19]{R} and, then, it is also projective by
\cite[Theorem 3.56]{R}. Hence it is a direct summand of a direct
sum of copies of $U_n(D)$, say $\bigoplus\limits_{\mathcal{I}}
U_n(D)=M_n(D)\oplus K$ for some submodule $K$ of
$\bigoplus\limits_{\mathcal{I}} U_n(D)$ where $\mathcal{I}$ is an
index set. Also, $\bigoplus\limits_{\mathcal{I}}
U_n(D)=U_n(D)\oplus L$ for some submodule $L$ of
$\bigoplus\limits_{\mathcal{I}} U_n(D)$.  Since $U_n(D)$ is a
submodule of $M_n(D)$, the modularity condition entails that
$U_n(D)$ is a direct summand of $M_n(D)$. So there exists the
natural epimorphism $\pi\colon M_n(D)\rightarrow U_n(D)$. Then
$M_n(D)/$Ker$\pi\cong U_n(D)$. Since $M_n(D)$ is an injective left
$U_n(D)$-module and $U_n(D)$ is left hereditary by \cite[Example
2.8.13]{Ro}, $M_n(D)/$Ker$\pi$ is also injective, so $U_n(D)$ is
left self-injective, but this is a contradiction, because
the homomorphism $g\colon U_n(D)\left[%
\begin{array}{cc}
  0 & 1 \\
  0 & 0 \\
\end{array}%
\right] \rightarrow U_n(D)$ defined by $g\left[%
\begin{array}{cc}
  0 & x \\
  0 & 0 \\
\end{array}%
\right]=\left[%
\begin{array}{cc}
  x & 0 \\
  0 & 0 \\
\end{array}%
\right]$ can not be extended to $U_n(D)$. Therefore  $M_n(D)$ is
not a flat left $U_n(D)$-module.
\end{proof}

By \cite[Proposition 8.18]{R}, a left $R$-module $M$ is flat if
and only if Tor$^R_1(N,M)=0$ for every right $R$-module $N$. By
means of Example \ref{inj hull-ex}, the left $R$-module $E(_RR)$
need not be flat in general. In the following, we characterize
right $R$-modules which satisfy Tor$^R_1(N, E(_RR)) = 0$.

\begin{lem}\label{equiv}  Let $N$ be a right $R$-module. Then
Tor$^R_1(N, E(_RR)) = 0$ if and only if Hom$_{\Bbb Z}(N,\Bbb
Q/\Bbb Z)$ is Matlis injective.
\end{lem}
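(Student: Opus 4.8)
The plan is to deduce the equivalence from the classical natural isomorphism that links Tor and Ext through the character functor $(-)^{+}=\mbox{Hom}_{\Bbb Z}(-,\Bbb Q/\Bbb Z)$. The ingredient I would invoke is that, for a right $R$-module $A$ and a left $R$-module $B$, there is a natural isomorphism
$$\mbox{Hom}_{\Bbb Z}(\mbox{Tor}^R_n(A,B),\Bbb Q/\Bbb Z)\cong \mbox{Ext}^n_R(B,\mbox{Hom}_{\Bbb Z}(A,\Bbb Q/\Bbb Z))$$
for every $n\geq 0$. This follows from the Hom-tensor adjunction together with the exactness of $(-)^{+}$, which holds because $\Bbb Q/\Bbb Z$ is an injective $\Bbb Z$-module (see \cite{R}). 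I would first record this isomorphism and then specialize it to our situation.

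Specializing with $n=1$, $A=N$ (a right $R$-module) and $B=E(_RR)$ (a left $R$-module), I obtain
$$\mbox{Hom}_{\Bbb Z}(\mbox{Tor}^R_1(N,E(_RR)),\Bbb Q/\Bbb Z)\cong \mbox{Ext}^1_R(E(_RR),\mbox{Hom}_{\Bbb Z}(N,\Bbb Q/\Bbb Z)).$$
Here $\mbox{Hom}_{\Bbb Z}(N,\Bbb Q/\Bbb Z)$ is a left $R$-module, so its being Matlis injective is exactly the vanishing of the right-hand side, that is, $\mbox{Ext}^1_R(E(_RR),\mbox{Hom}_{\Bbb Z}(N,\Bbb Q/\Bbb Z))=0$. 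The two directions of the lemma then read off the single display.

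For the forward implication, if $\mbox{Tor}^R_1(N,E(_RR))=0$ then its character module is $0$, so the isomorphism forces the Ext-group to vanish and $\mbox{Hom}_{\Bbb Z}(N,\Bbb Q/\Bbb Z)$ is Matlis injective. For the converse, if $\mbox{Hom}_{\Bbb Z}(N,\Bbb Q/\Bbb Z)$ is Matlis injective then the right-hand side is $0$, hence $\mbox{Hom}_{\Bbb Z}(\mbox{Tor}^R_1(N,E(_RR)),\Bbb Q/\Bbb Z)=0$; since $\Bbb Q/\Bbb Z$ is a cogenerator for abelian groups, an abelian group with trivial character module is itself trivial, so $\mbox{Tor}^R_1(N,E(_RR))=0$. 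The only point requiring care is the handedness in the adjunction isomorphism: matching the right module $N$, the left module $E(_RR)$, and the left-module reading of Matlis injectivity through $E(_RR)$. Once these conventions are aligned, everything else is formal.
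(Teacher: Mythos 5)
Your proof is correct and takes essentially the same route as the paper: both rest on the natural isomorphism $\mbox{Hom}_{\Bbb Z}(\mbox{Tor}^R_1(N,E(_RR)),\Bbb Q/\Bbb Z)\cong \mbox{Ext}^1_R(E(_RR),\mbox{Hom}_{\Bbb Z}(N,\Bbb Q/\Bbb Z))$ (the paper cites \cite[Theorem 3.2.1]{Enochs-Jenda}, you sketch its derivation from adjunction and exactness of the character functor) and then read off both directions, using that $\Bbb Q/\Bbb Z$ is a cogenerator for the converse. Your explicit attention to the left-module reading of Matlis injectivity and the cogenerator step only makes explicit what the paper leaves implicit.
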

\begin{proof}  According to \cite[Theorem 3.2.1]{Enochs-Jenda}, \begin{center} Hom$_{\Bbb
Z}($Tor$^R_1(N, E(_RR)),\Bbb Q/\Bbb Z)\cong $ Ext$^1_R(E(_RR),$
Hom$_{\Bbb Z}(N,\Bbb Q/\Bbb Z))$.\end{center} This implies
Ext$^1_R(E(_RR),$ Hom$_{\Bbb Z}(N,\Bbb Q/\Bbb Z))=0$ if and only
if Tor$^R_1(N, E(_RR))=0$.
\end{proof}

We now give our main definition, namely, Harmanci injective
modules.

\begin{df} {\rm A right $R$-module $M$ is said to be {\it Harmanci
injective} if \linebreak Tor$^R_1\big(N, E(_RR)\big) = 0$ implies
Ext$^1_R(N,M) = 0$ for every right $R$-module $N$.}
\end{df}

So clearly we have the next result.

\begin{prop}\label{inj hull} Let $R$ be a ring. Then  Hom$_{\Bbb Z}(E(_RR), \Bbb
Q/\Bbb Z)$ is a Harmanci injective right $R$-module.
\end{prop}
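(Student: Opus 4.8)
The plan is to deduce the Harmanci injectivity of $\mathrm{Hom}_{\Bbb Z}(E(_RR),\Bbb Q/\Bbb Z)$ directly from the character-module adjunction already exploited in Lemma \ref{equiv}. By the definition of Harmanci injectivity, what must be shown is that for every right $R$-module $N$ with $\mathrm{Tor}^R_1(N,E(_RR))=0$ one has $\mathrm{Ext}^1_R\big(N,\mathrm{Hom}_{\Bbb Z}(E(_RR),\Bbb Q/\Bbb Z)\big)=0$. So I would fix an arbitrary such $N$ and aim to compute this Ext-group.

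The key step is to invoke \cite[Theorem 3.2.1]{Enochs-Jenda} once more, but in the form in which $E(_RR)$ (rather than $N$) is the module whose character module appears. Writing $A=E(_RR)$, which is a left $R$-module so that $\mathrm{Hom}_{\Bbb Z}(A,\Bbb Q/\Bbb Z)$ is a right $R$-module, this yields the natural isomorphism
$$\mathrm{Ext}^1_R\big(N,\mathrm{Hom}_{\Bbb Z}(E(_RR),\Bbb Q/\Bbb Z)\big)\cong\mathrm{Hom}_{\Bbb Z}\big(\mathrm{Tor}^R_1(N,E(_RR)),\Bbb Q/\Bbb Z\big).$$
With this identity in hand the argument closes immediately: the hypothesis $\mathrm{Tor}^R_1(N,E(_RR))=0$ makes the right-hand side $\mathrm{Hom}_{\Bbb Z}(0,\Bbb Q/\Bbb Z)=0$, whence $\mathrm{Ext}^1_R(N,\mathrm{Hom}_{\Bbb Z}(E(_RR),\Bbb Q/\Bbb Z))=0$. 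As $N$ was arbitrary subject only to the Tor-vanishing condition, this is precisely the required implication.

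The one point I would take care to get right---and really the only substantive content of the proof---is the bookkeeping of sides in the adjunction. Lemma \ref{equiv} used the companion (balanced) form $\mathrm{Hom}_{\Bbb Z}(\mathrm{Tor}^R_1(N,E(_RR)),\Bbb Q/\Bbb Z)\cong\mathrm{Ext}^1_R(E(_RR),\mathrm{Hom}_{\Bbb Z}(N,\Bbb Q/\Bbb Z))$, in which $N$ sits inside the character functor; here I need the variant placing $E(_RR)$ there instead, so that $N$ becomes the first argument of $\mathrm{Ext}$ and the whole computation lives in the category of right $R$-modules. Both are instances of the standard duality $\mathrm{Hom}_{\Bbb Z}(\mathrm{Tor}^R_1(N,A),\Bbb Q/\Bbb Z)\cong\mathrm{Ext}^1_R(N,\mathrm{Hom}_{\Bbb Z}(A,\Bbb Q/\Bbb Z))$ for a right module $N$ and a left module $A$, and verifying that the left/right structures match (so that the target module is genuinely a right $R$-module and the $\mathrm{Ext}$ is taken over right $R$-modules) is all that is needed; no further obstacle arises.
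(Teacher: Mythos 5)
Your proof is correct and is essentially the paper's own argument: the paper states this proposition without a written proof (``So clearly we have the next result''), the intended justification being exactly the duality isomorphism Ext$^1_R\big(N,$ Hom$_{\Bbb Z}(E(_RR),\Bbb Q/\Bbb Z)\big)\cong$ Hom$_{\Bbb Z}\big($Tor$^R_1(N,E(_RR)),\Bbb Q/\Bbb Z\big)$ that you invoke, which the paper itself deploys explicitly later in the proof of Theorem \ref{pair}. Your care about which module sits inside the character functor (so that the Ext lives over right $R$-modules) is exactly the right point to check, and it is handled correctly.
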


Obviously, every injective module is Harmanci injective.  We now
observe when the converse of this statement holds. For a
commutative Noetherian ring $R$,  $E(R)$ being flat is
characterized in \cite[Theorem 5.1.3]{X}. Via the next theorem, we
characterize $E(_RR)$ being flat for an arbitrary ring $R$. On the
other hand, this result and Example \ref{inj hull-ex} make sure
that there exists a Harmanci injective right $U_n(D)$-module which
is not injective.

\begin{thm}\label{flat} Let $R$ be a ring. Then  every Harmanci injective right
$R$-module is injective if and only if  $E(_RR)$ is flat.
\end{thm}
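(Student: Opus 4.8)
The plan is to translate the defining condition of Harmanci injectivity into the usual homological criteria for injectivity and flatness, with Proposition \ref{inj hull} and Lambek's theorem serving as the two bridges. Each direction then reduces to a one-line deduction.

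For the backward implication, I would assume $E(_RR)$ is flat. By the Tor-characterization of flatness recalled just before Lemma \ref{equiv} (namely \cite[Proposition 8.18]{R}), flatness of $E(_RR)$ means that Tor$^R_1(N, E(_RR)) = 0$ for \emph{every} right $R$-module $N$. Let $M$ be Harmanci injective. Its defining property is that Tor$^R_1(N, E(_RR)) = 0$ implies Ext$^1_R(N, M) = 0$; since the hypothesis of this implication now holds for all $N$ at once, we conclude Ext$^1_R(N, M) = 0$ for every right $R$-module $N$. This is precisely the standard Ext-criterion for injectivity, so $M$ is injective.

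For the forward implication, I would invoke the explicit Harmanci injective module supplied by Proposition \ref{inj hull}, namely the character module Hom$_{\Bbb Z}(E(_RR), \Bbb Q/\Bbb Z)$. Under the hypothesis that every Harmanci injective right $R$-module is injective, this character module is injective. By Lambek's theorem (recalled in the introduction from \cite{Lambek}), a left $R$-module is flat if and only if its character module is an injective right $R$-module; applying this to the left $R$-module $E(_RR)$ immediately yields that $E(_RR)$ is flat.

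The real content of the theorem resides in Proposition \ref{inj hull} rather than in the present argument, which is short once that proposition and Lambek's duality are at hand. The only delicate point is the logical reading of the Harmanci injective condition: it is a single implication quantified universally over $N$, and the backward direction succeeds precisely because flatness of $E(_RR)$ makes the hypothesis of that implication hold for every $N$ simultaneously, thereby forcing the vanishing of Ext$^1_R(N, M)$ across all $N$, and hence the injectivity of $M$.
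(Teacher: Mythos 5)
Your proposal is correct and follows essentially the same route as the paper's proof: the forward direction applies Proposition \ref{inj hull} together with Lambek's characterization of flatness via character modules, and the backward direction uses flatness of $E(_RR)$ to make the Tor-hypothesis hold for every $N$, so that Harmanci injectivity forces Ext$^1_R(N,M)=0$ universally and hence injectivity of $M$. The only difference is cosmetic: you spell out the appeal to Lambek's theorem, which the paper leaves implicit in the phrase ``this yields that $E(_RR)$ is flat.''
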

\begin{proof} Assume that Harmanci injectivity implies
injectivity. By Proposition \ref{inj hull}, Hom$_{\Bbb Z}(E(_RR),
\Bbb Q/\Bbb Z)$ is Harmanci injective, and so is injective. This
yields that $E(_RR)$ is flat. Suppose now that $E(_RR)$ is flat
and $M$ is a Harmanci injective right $R$-module. Let $N$ be any
right $R$-module. The module $E(_RR)$ being flat implies
Tor$^R_1(N,E(_RR))=0$. Since $M$ is Harmanci injective,
Ext$^1_R(N,M) = 0$. Therefore $M$ is injective.
\end{proof}

We immediately get the next consequences from Theorem \ref{flat}.

\begin{cor}\label{inj-har} The following hold.
\begin{enumerate}
\item Injectivity and Harmanci injectivity coincide for the
modules over a commutative  domain.
\item If $R$ is a left self-injective ring or a von Neumann regular ring, then a right
$R$-module $M$ is injective if and only if $M$ is Harmanci
injective.
\end{enumerate}
\end{cor}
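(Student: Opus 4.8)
The plan is to derive both statements directly from Theorem \ref{flat}, which reduces the coincidence of injectivity and Harmanci injectivity to the flatness of $E(_RR)$. Thus in each case it suffices to verify that the injective envelope of $R$ as a module over itself is flat.

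For (1), let $R$ be a commutative domain with field of fractions $K = S^{-1}R$, where $S = R \setminus \{0\}$. First I would show that $K$ is the injective envelope $E(R)$. Essentiality is immediate: for $0 \neq a/b \in K$ one has $b\cdot(a/b) = a \in R \setminus \{0\}$, so $R(a/b) \cap R \neq 0$. The substantive point is injectivity of $K$, which I would establish via Baer's criterion: given an ideal $I$ of $R$ and a map $f \colon I \to K$, fix $0 \neq a \in I$ and note that for every $b \in I$ the relation $af(b) = f(ab) = bf(a)$ forces $f(b) = b \cdot (f(a)/a)$, so $f$ is multiplication by the fixed element $f(a)/a \in K$ and extends to $R$. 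Hence $E(R) = K = S^{-1}R$, which is flat as a localization of $R$. Theorem \ref{flat} then finishes (1).

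For (2), the flatness of $E(_RR)$ is even more transparent. If $R$ is left self-injective, then ${}_RR$ is already injective, so $E(_RR) = {}_RR$ is free and therefore flat. If $R$ is von Neumann regular, then every left $R$-module is flat, so in particular $E(_RR)$ is flat. In either case Theorem \ref{flat} yields that a right $R$-module is injective precisely when it is Harmanci injective.

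The only genuinely nontrivial step is the identification $E(R) = K$ in part (1), and within it the verification that $K$ is injective; once that is in hand, everything else is a direct appeal to Theorem \ref{flat} together with the standard facts that localizations are flat, that free modules are flat, and that all modules over a von Neumann regular ring are flat.
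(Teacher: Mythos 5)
Your proof is correct and takes essentially the same approach as the paper: both reduce each case to Theorem \ref{flat} by verifying that $E(_RR)$ is flat, identifying it with the field of fractions in case (1) and invoking self-injectivity (so $E(_RR)={_RR}$) or von Neumann regularity (so all modules are flat) in case (2). The only difference is cosmetic: you prove directly (via essentiality and Baer's criterion) that the fraction field is the injective envelope and note its flatness as a localization, whereas the paper simply cites these standard facts from Rotman.
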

\begin{proof} (1) Let $R$ be a commutative domain. Then $E(_RR)$  is the field of fractions of $R$
and it is a flat $R$-module by \cite[Corollary 5.35(i)]{R}. Hence
Theorem \ref{flat} completes the proof.\\
(2) If $R$ is a left self-injective ring, then the assertion is
obtained immediately from Theorem \ref{flat}.  If $R$ is a von
Neumann regular ring, then the proof is clear from the fact that
every module over $R$ is flat and Theorem \ref{flat}.
\end{proof}

The converse statement of (2) in Corollary \ref{inj-har}  need not
be true in general as shown below.
\begin{ex}{\rm By Corollary \ref{inj-har}(1), every Harmanci injective
$\Bbb Z$-module is injective but $\Bbb Z$ is neither
self-injective nor von Neumann regular.}
\end{ex}

In the next result, we are interested in the flat dimension of
$E(_RR)$ for any ring $R$ in terms of Harmanci injectivity.

\begin{thm} Let $R$ be a ring. Then the flat dimension of $E(_RR)$
is exactly $1$ if and only if the injective dimension of every
Harmanci injective right $R$-module is exactly $1$.
\end{thm}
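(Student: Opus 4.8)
The plan is to reduce each side of the equivalence to a vanishing condition on Tor and Ext groups, and to bridge the two sides through the character module of $E(_RR)$. Throughout write $E=E(_RR)$ and $E^{+}=\,$Hom$_{\Bbb Z}(E,\Bbb Q/\Bbb Z)$. I read the phrase ``the injective dimension of every Harmanci injective right $R$-module is exactly $1$'' as the conjunction: every Harmanci injective module $M$ satisfies id$(M)\le 1$, and at least one Harmanci injective module has injective dimension equal to $1$; by Theorem \ref{flat} the latter is equivalent to $E$ not being flat. Recall also that fd$(E)=1$ means precisely fd$(E)\le 1$ together with $E$ not flat.

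For the direction assuming fd$(E)=1$, first observe that fd$(E)\le 1$ gives Tor$^R_2(N,E)=0$ for every right $R$-module $N$. Let $M$ be Harmanci injective and $N$ arbitrary; I would pick a short exact sequence $0\to K\to P\to N\to 0$ with $P$ projective. Dimension shifting along the induced long exact Ext-sequence (using Ext$^1_R(P,M)=0=$ Ext$^2_R(P,M)$) yields Ext$^2_R(N,M)\cong$ Ext$^1_R(K,M)$, while the long exact Tor-sequence, together with Tor$^R_1(P,E)=0$ (as $P$ is flat) and Tor$^R_2(N,E)=0$, forces Tor$^R_1(K,E)=0$. Harmanci injectivity of $M$ then gives Ext$^1_R(K,M)=0$, hence Ext$^2_R(N,M)=0$ for all $N$, i.e. id$(M)\le 1$. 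Finally, since fd$(E)=1$ means $E$ is not flat, Theorem \ref{flat} supplies a Harmanci injective module that is not injective; combined with the bound just proved, that module has injective dimension exactly $1$.

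For the converse, suppose the stated injective-dimension condition holds. The existence of a non-injective Harmanci injective module gives, via Theorem \ref{flat}, that $E$ is not flat, so fd$(E)\ge 1$. For the opposite inequality I would test the hypothesis on the single module $E^{+}$: by Proposition \ref{inj hull} it is Harmanci injective, so id$(E^{+})\le 1$. Now the character-module duality of \cite[Theorem 3.2.1]{Enochs-Jenda}, which for every $n$ gives a natural isomorphism Hom$_{\Bbb Z}($Tor$^R_n(N,E),\Bbb Q/\Bbb Z)\cong$ Ext$^n_R(N,E^{+})$, together with the fact that $\Bbb Q/\Bbb Z$ is an injective cogenerator (so an abelian group vanishes iff its character group does), shows that Tor$^R_{2}(N,E)=0$ for all $N$ is equivalent to Ext$^2_R(N,E^{+})=0$ for all $N$; that is, fd$(E)=$ id$(E^{+})\le 1$. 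Hence fd$(E)=1$.

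The routine part is the Tor--Ext bookkeeping via dimension shifting. The conceptual crux, and what I expect to be the key step, is the observation in the converse direction that $E^{+}$ is itself a Harmanci injective module whose injective dimension equals fd$(E)$; this lets the universally quantified hypothesis be applied to one distinguished test module. The only point needing care is that the cited duality is invoked at degree $n=2$ (not merely $n=1$ as in Lemma \ref{equiv}), which is legitimate since the isomorphism of \cite[Theorem 3.2.1]{Enochs-Jenda} holds in all degrees.
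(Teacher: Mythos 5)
Your proof is correct and follows essentially the same route as the paper: both establish the equivalence ``fd$(E(_RR))\le 1$ iff id$(M)\le 1$ for all Harmanci injective $M$'' by dimension shifting along a projective presentation (using Harmanci injectivity on the first syzygy) in one direction and by testing on the distinguished module Hom$_{\Bbb Z}(E(_RR),\Bbb Q/\Bbb Z)$ with the degree-$2$ character duality in the other, then invoke Theorem \ref{flat} to convert ``at most $1$'' into ``exactly $1$.'' Your write-up merely makes explicit the reduction of ``exactly $1$'' to ``$\le 1$ plus non-flatness,'' which the paper leaves implicit in its closing sentence.
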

\begin{proof} Firstly, we assert that fd$(E(_RR))$ is at most $1$ if and
only if id$(M)$ is at most $1$ for every Harmanci injective right
$R$-module $M$. In an attempt to prove the necessity of this
assertion, let $M$ and $N$ be right $R$-modules with $M$ Harmanci
injective. There exists an exact sequence $0\rightarrow
K\rightarrow F\rightarrow N \rightarrow 0$ where $F$ is a free
right $R$-module. Applying the functor $-\otimes_R E(_RR)$ to the
sequence, we get the exactness of

$\cdots \rightarrow $ Tor$^R_2(N,E(_RR))\rightarrow $
Tor$^R_1(K,E(_RR))\rightarrow$
Tor$^R_1(F,E(_RR))\rightarrow\cdots$. \\
Being fd$(E(_RR))\leq 1$ and flatness of $F$ imply that
Tor$^R_1(K,E(_RR))=0$. Since $M$ is Harmanci injective,
Ext$^1_R(K,M)=0$. On the other hand, we also have the exact
sequence $0=$ Ext$^1_R(K,M)\rightarrow$ Ext$^2_R(N,M)\rightarrow$
Ext$^2_R(F,M)\rightarrow\cdots$. As $F$ is projective,
Ext$^2_R(F,M)=0$. Thus Ext$^2_R(N,M)=0$. This implies id$(M)\leq
1$. For the sufficiency, let $N$ be a right $R$-module. By
Proposition \ref{inj hull} and hypothesis, id$($Hom$_{\Bbb
Z}(E(_RR), \Bbb Q/\Bbb Z))\leq 1$. This yields Ext$^2_R(N,$
Hom$_{\Bbb Z}(E(_RR), \Bbb Q/\Bbb Z))=0$. So by the isomorphism
Ext$^2_R(N,$ Hom$_{\Bbb Z}(E(_RR), \Bbb Q/\Bbb Z))\cong$
Hom$_{\Bbb Z}($Tor$^R_2(N, E(_RR)),\Bbb Q/\Bbb Z)$, we have
Tor$^R_2(N,E(_RR))=0$. Therefore fd$(E(_RR))\leq 1$. Now we
complete the proof in the light of this authenticated assertion
and Theorem \ref{flat}.
\end{proof}

The next result shows that the class of Harmanci injective modules
lies between those of injective modules and cotorsion modules.

\begin{prop}\label{cotor} Every Harmanci injective right $R$-module is cotorsion.
\end{prop}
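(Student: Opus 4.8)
The plan is to reduce cotorsionness to the defining property of Harmanci injectivity, since the two notions are phrased in terms of the same pair of derived functors. Recall that $M$ is cotorsion precisely when $\mathrm{Ext}^1_R(F,M)=0$ for every flat right $R$-module $F$, whereas $M$ being Harmanci injective means that $\mathrm{Tor}^R_1(N,E(_RR))=0$ forces $\mathrm{Ext}^1_R(N,M)=0$ for every right $R$-module $N$. Thus, given a Harmanci injective module $M$, it suffices to show that every flat right $R$-module $F$ already satisfies the hypothesis $\mathrm{Tor}^R_1(F,E(_RR))=0$, for then the definition immediately supplies the required vanishing $\mathrm{Ext}^1_R(F,M)=0$.

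First I would invoke the characterization of flatness via Tor that was recalled just before Lemma \ref{equiv}, namely \cite[Proposition 8.18]{R}: a module is flat if and only if the first Tor functor against it vanishes on every module of the opposite side. Applying the right-module version of this criterion to the flat right $R$-module $F$, tensored with the left $R$-module $E(_RR)$, gives $\mathrm{Tor}^R_1(F,E(_RR))=0$ at once. Feeding $N=F$ into the definition of Harmanci injectivity then yields $\mathrm{Ext}^1_R(F,M)=0$, and since $F$ was an arbitrary flat right $R$-module, $M$ is cotorsion.

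There is essentially no technical obstacle in this argument; the whole content is the observation that flatness of $F$ is exactly the condition needed to activate the Harmanci injectivity hypothesis. The one point that merits care is the handedness: one must use the flatness criterion in the form appropriate for a flat \emph{right} module tensored with the \emph{left} module $E(_RR)$, so that the resulting Tor term matches verbatim the hypothesis $\mathrm{Tor}^R_1(N,E(_RR))=0$ occurring in the definition. Once this bookkeeping is in place, the inclusion of Harmanci injective modules in cotorsion modules follows in one line.
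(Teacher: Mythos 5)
Your proof is correct and follows exactly the paper's own argument: flatness of $F$ gives $\mathrm{Tor}^R_1(F,E(_RR))=0$, and the definition of Harmanci injectivity then yields $\mathrm{Ext}^1_R(F,M)=0$ for every flat $F$. The remark about handedness of the flatness criterion is a fine point of care, but the argument is the same one-line reduction as in the paper.
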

\begin{proof} Let $M$ and $F$ be right $R$-modules with $M$ Harmanci injective and $F$  flat.
Then Tor$^R_1(F,E(_RR))=0$. Hence Harmanci injectivity of $M$
implies Ext$^1_R(F,M)=0$. Thus $M$ is cotorsion.
\end{proof}

The next examples show that the converse of Proposition
\ref{cotor} need not be hold in general.

\begin{ex} {\rm(1) The $\Bbb Z$-module $M=$ Hom$_{\Bbb Z}(\Bbb Z/2\Bbb Z, \Bbb Q/\Bbb
Z)$ is pure-injective as it is the character module of $\Bbb
Z/2\Bbb Z$ and so it is cotorsion. On the other hand, $M$ is not
injective because $\Bbb Z/2\Bbb Z$ is not flat. Then Corollary
\ref{inj-har}(1) implies that $M$ is not
Harmanci injective.\\
(2) Let $R$ be a quasi-Frobenius (shortly, QF) ring which is not
right pure-semisimple. Then $R$ is right perfect. Since every flat
right $R$-module is projective, every right $R$-module is
cotorsion. On the other hand, there is a right $R$-module $M$
which is not pure-injective as $R$ is not right pure-semisimple.
Hence $M$ is not injective. The ring $R$ being left self-injective
implies $M$ is not Harmanci injective by Corollary
\ref{inj-har}(2).}
\end{ex}

Recall that a right $R$-module $M$ is said to be {\it divisible}
if Ext$^1_R(R/aR, M) = 0$ for all $a\in R$. By Corollary
\ref{inj-har}(1) and \cite[Corollary 3.35(i)]{R}, if $R$ is a
principal ideal domain, then an $R$-module $M$ is Harmanci
injective if and only if $M$ is injective if and only if $M$ is
divisible. In the next result, we investigate when Harmanci
injectivity implies being divisible.

\begin{prop} Let $R$ be a ring. If every principal right ideal of
$R$ is pure, then every Harmanci injective right $R$-module is
divisible.
\end{prop}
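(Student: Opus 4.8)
The plan is to unwind the definitions and reduce everything to a single vanishing statement for Tor. To prove that a Harmanci injective right $R$-module $M$ is divisible, I must verify that Ext$^1_R(R/aR, M) = 0$ for every $a\in R$. Since $M$ is Harmanci injective, it suffices to establish that Tor$^R_1(R/aR, E(_RR)) = 0$ for each $a\in R$; the defining implication of Harmanci injectivity then delivers Ext$^1_R(R/aR, M) = 0$ for free.

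The heart of the argument is to exploit the purity hypothesis on principal right ideals. Fix $a\in R$ and consider the short exact sequence of right $R$-modules $0\rightarrow aR\rightarrow R\rightarrow R/aR\rightarrow 0$. By hypothesis $aR$ is a pure submodule of $R_R$, so tensoring this sequence with any left $R$-module $L$ keeps it exact; in particular the induced map $aR\otimes_R L\rightarrow R\otimes_R L$ is injective. I would then read off the long exact sequence obtained by applying $-\otimes_R L$, noting that $R$ is free, hence flat, so that Tor$^R_1(R, L)=0$. Combining the injectivity coming from purity with this vanishing forces Tor$^R_1(R/aR, L)=0$; equivalently, $R/aR$ is flat, being the quotient of the flat module $R$ by a pure submodule.

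Specializing $L$ to the left $R$-module $E(_RR)$ then yields Tor$^R_1(R/aR, E(_RR)) = 0$ for every $a\in R$, which is precisely what the reduction in the first step requires. Since $a$ was arbitrary, this shows that $M$ is divisible.

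I do not anticipate a serious obstacle here: the only point requiring genuine care is the extraction of the Tor vanishing from purity, where one must keep track of the sidedness --- the ideals $aR$ sit inside the right module $R_R$, whereas $E(_RR)$ enters as a left module --- and invoke flatness of $R$ to annihilate the term Tor$^R_1(R, L)$. Once that bookkeeping is settled, the conclusion is immediate from the definition of Harmanci injectivity.
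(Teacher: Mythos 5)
Your proof is correct and follows essentially the same route as the paper: both tensor the short exact sequence $0\rightarrow aR\rightarrow R\rightarrow R/aR\rightarrow 0$ with $E(_RR)$, use flatness of $R$ to kill Tor$^R_1(R,E(_RR))$ and purity of $aR$ to make $aR\otimes_R E(_RR)\rightarrow R\otimes_R E(_RR)$ injective, conclude Tor$^R_1(R/aR,E(_RR))=0$, and then invoke the definition of Harmanci injectivity. The only cosmetic difference is that you run the argument for an arbitrary left module $L$ (in effect noting that $R/aR$ is flat) before specializing to $E(_RR)$, whereas the paper works with $E(_RR)$ directly; this changes nothing of substance.
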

\begin{proof} Let $M$ be a Harmanci injective right $R$-module and
$a\in R$. Consider the short exact sequence $0\rightarrow
aR\rightarrow R \rightarrow R/aR\rightarrow 0$. Then we have the
exactness of $0=$ Tor$_1^R(R,E(_RR))\rightarrow$
Tor$_1^R(R/aR,E(_RR))\rightarrow aR\otimes E(_RR)\rightarrow R
\otimes E(_RR)\rightarrow R/aR\otimes E(_RR)\rightarrow 0$. Since
$aR$ is pure in $R$, the homomorphism $aR\otimes E(_RR)\rightarrow
R \otimes E(_RR)$ is monic. It follows that
Tor$_1^R(R/aR,E(_RR))=0$. The module $M$ being Harmanci injective
implies Ext$^1_R(R/aR, M)=0$. Therefore $M$ is divisible.
\end{proof}

For any ring $R$, owing to Proposition \ref{inj hull}, the
character module of $E(_RR)$ is always Harmanci injective. In the
next result, we investigate  Harmanci injectivity of a character
module of any module.
\begin{thm}\label{char} Let $S$ be a ring and $N$ a left $S$-module. Then the following are equivalent.
\begin{enumerate}
    \item[(1)] Hom$_{\Bbb Z}(N, \Bbb Q/\Bbb Z)$ is a Harmanci
    injective right $S$-module.
    \item[(2)] For every ring $R$ with $N$ a left $S$-right
    $R$-bimodule and every injective right $R$-module $M$, Hom$_R(N,M)$ is  Harmanci injective as a right
    $S$-module.
\end{enumerate}
\end{thm}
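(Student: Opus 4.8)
The plan is to reduce both implications to a single Ext--Tor duality together with the fact that $\Bbb Q/\Bbb Z$ is an injective cogenerator of the category of abelian groups. The key tool is the natural isomorphism
\[ \operatorname{Ext}^1_S\!\big(P,\operatorname{Hom}_R(N,M)\big)\;\cong\;\operatorname{Hom}_R\!\big(\operatorname{Tor}^S_1(P,N),M\big), \]
valid for every right $S$-module $P$, every left $S$-right $R$-bimodule structure on $N$, and every \emph{injective} right $R$-module $M$. First I would establish this isomorphism. Choosing a projective resolution $P_\bullet\to P$ of $P$ as a right $S$-module, the Hom--tensor adjunction $\operatorname{Hom}_S(-,\operatorname{Hom}_R(N,M))\cong\operatorname{Hom}_R(-\otimes_S N,M)$ identifies the cochain complex $\operatorname{Hom}_S(P_\bullet,\operatorname{Hom}_R(N,M))$ with $\operatorname{Hom}_R(P_\bullet\otimes_S N,M)$. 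Since $M$ is injective, $\operatorname{Hom}_R(-,M)$ is exact and hence commutes with homology, so the degree-$1$ cohomology of the latter complex is $\operatorname{Hom}_R(\operatorname{Tor}^S_1(P,N),M)$. This is precisely the right-module form over $S$ of the duality already invoked (via \cite{Enochs-Jenda}) in the proof of Lemma \ref{equiv}.

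With this in hand, the implication $(2)\Rightarrow(1)$ is immediate: taking $R=\Bbb Z$ and $M=\Bbb Q/\Bbb Z$, the module $N$ is canonically a left $S$-right $\Bbb Z$-bimodule and $\Bbb Q/\Bbb Z$ is an injective right $\Bbb Z$-module, so $(2)$ applied to this data says exactly that $\operatorname{Hom}_{\Bbb Z}(N,\Bbb Q/\Bbb Z)$ is Harmanci injective.

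For the substantive implication $(1)\Rightarrow(2)$, fix a right $S$-module $P$ with $\operatorname{Tor}^S_1(P,E(_SS))=0$; the goal is to prove $\operatorname{Ext}^1_S(P,\operatorname{Hom}_R(N,M))=0$ for every ring $R$ carrying such a bimodule structure on $N$ and every injective right $R$-module $M$. Applying hypothesis $(1)$ to $P$ gives $\operatorname{Ext}^1_S(P,\operatorname{Hom}_{\Bbb Z}(N,\Bbb Q/\Bbb Z))=0$, and the displayed isomorphism in the case $R=\Bbb Z$, $M=\Bbb Q/\Bbb Z$ turns this into $\operatorname{Hom}_{\Bbb Z}(\operatorname{Tor}^S_1(P,N),\Bbb Q/\Bbb Z)=0$. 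Because $\Bbb Q/\Bbb Z$ cogenerates $\mathbf{Ab}$, a $\Bbb Z$-module whose character module vanishes must itself be zero, whence $\operatorname{Tor}^S_1(P,N)=0$. Feeding this back into the isomorphism for an arbitrary $R$ and injective $M$ yields $\operatorname{Ext}^1_S(P,\operatorname{Hom}_R(N,M))\cong\operatorname{Hom}_R(0,M)=0$, which is what we needed.

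I expect the only genuine difficulty to lie in the first paragraph: one must keep careful track of the left-$S$/right-$R$ bimodule bookkeeping so that $P\otimes_S N$ inherits a right $R$-module structure and $\operatorname{Hom}_R(N,M)$ the intended right $S$-module structure, and must verify that the adjunction and the exactness of $\operatorname{Hom}_R(-,M)$ interact as claimed at the level of the resolution. Once the isomorphism is secured with the correct module structures, both implications are entirely formal, with the cogenerator property of $\Bbb Q/\Bbb Z$ doing the remaining work.
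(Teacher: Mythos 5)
Your proof is correct and follows essentially the same route as the paper's: both implications rest on the duality $\operatorname{Ext}^1_S(K,\operatorname{Hom}_R(N,M))\cong\operatorname{Hom}_R(\operatorname{Tor}^S_1(K,N),M)$ for injective $M$, the deduction $\operatorname{Tor}^S_1(K,N)=0$ from the vanishing character module via the cogenerator property of $\Bbb Q/\Bbb Z$, and the specialization $R=\Bbb Z$, $M=\Bbb Q/\Bbb Z$ for $(2)\Rightarrow(1)$. The only difference is cosmetic: you prove the key isomorphism explicitly from a projective resolution and the Hom--tensor adjunction, whereas the paper cites it (via \cite{Enochs-Jenda}) for the character-module case and invokes the general injective case implicitly in its final step.
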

\begin{proof} (1) $\Rightarrow$ (2) Let $N$ be a left $S$-right
$R$-bimodule, $M$ an injective right $R$-module and $K$ a right
$S$-module with Tor$^S_1\big(K, E(_SS)\big) = 0$. We claim that
Ext$^1_S(K, $ Hom$_R(N,M))=0$. Harmanci injectivity of the right
$S$-module Hom$_{\Bbb Z}(N, \Bbb Q/\Bbb Z)$ yields Ext$^1_S(K, $
Hom$_{\Bbb Z}(N, \Bbb Q/\Bbb Z))=0$. It follows Hom$_{\Bbb
Z}($Tor$_1^S(K,N),\Bbb Q/\Bbb Z)=0$, and so Tor$_1^S(K,N)=0$.
Hence Hom$_R($Tor$_1^S(K,N),M)=0$. Thus injectivity of $M$ implies
Ext$^1_S(K, $ Hom$_R(N,M))=0$. Therefore the right $S$-module
Hom$_R(N,M)$ is  Harmanci injective.\\
(2) $\Rightarrow$ (1) Obvious by taking $R=\Bbb Z$ and $M=\Bbb
Q/\Bbb Z$.
\end{proof}

Owing to Theorem \ref{char} and Proposition \ref{inj hull}, we
acquire the next result.
\begin{cor} Let $S$ be a ring. Then for every ring $R$ with $E(_SS)$ a left $S$-right
    $R$-bimodule and every injective right $R$-module $M$, Hom$_R(E(_SS),M)$ is a Harmanci injective right
    $S$-module.
\end{cor}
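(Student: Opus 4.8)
The plan is to obtain this corollary as a direct specialization of Theorem \ref{char}, using Proposition \ref{inj hull} to supply the single hypothesis that needs checking. The key observation is that the distinguished module to which both earlier results apply is $E(_SS)$ itself.

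First I would note that $E(_SS)$, the injective envelope of $S$ regarded as a left $S$-module, is in particular a left $S$-module, and hence is an admissible choice for the module $N$ in the statement of Theorem \ref{char}. The whole argument therefore consists of verifying that this choice of $N$ satisfies condition (1) of that theorem and then reading off condition (2).

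To verify (1), I would apply Proposition \ref{inj hull} with the ring $S$ playing the role of the ring $R$ in that proposition. This yields that $\mbox{Hom}_{\Bbb Z}(E(_SS), \Bbb Q/\Bbb Z)$ is a Harmanci injective right $S$-module, which is exactly statement (1) of Theorem \ref{char} for $N = E(_SS)$. Invoking the implication (1) $\Rightarrow$ (2) of Theorem \ref{char}, I would then conclude that for every ring $R$ for which $E(_SS)$ carries a left $S$-right $R$-bimodule structure and every injective right $R$-module $M$, the module $\mbox{Hom}_R(E(_SS), M)$ is Harmanci injective as a right $S$-module. This is precisely the claim, so the proof is complete.

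There is essentially no obstacle here: the bimodule hypothesis and the injectivity of $M$ are transported verbatim from condition (2) of Theorem \ref{char}, and the only module-specific input is the Harmanci injectivity of the character module of $E(_SS)$, which Proposition \ref{inj hull} delivers at once. The only point worth a moment's care is matching the roles of the rings, namely that Proposition \ref{inj hull} is applied to $S$ while the bimodule structure involves a second, arbitrary ring $R$; once this bookkeeping is observed, the corollary is immediate.
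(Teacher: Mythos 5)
Your proof is correct and is exactly the argument the paper intends: it deduces the corollary from Theorem \ref{char} applied to $N = E(_SS)$, with Proposition \ref{inj hull} (applied to the ring $S$) supplying hypothesis (1) that the character module of $E(_SS)$ is Harmanci injective. The careful bookkeeping of the two rings $S$ and $R$ is the only subtlety, and you handled it correctly.
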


In the sequel, let  $\mathcal{HI}$ denote the class of all
Harmanci injective right $R$-modules and $\mathcal{CMI}$ stand for
the class of all right $R$-modules whose character modules are
Matlis injective. In the following, we mention some properties of
the class $\mathcal{HI}$.

\begin{lem}\label{basic} Harmanci injective modules satisfy the following properties.
\begin{enumerate}
\item[(1)] Any direct product of Harmanci injective modules is
Harmanci injective.
\item[(2)] A finite direct sum of Harmanci injective modules is
Harmanci injective.
\item[(3)] Direct summands of Harmanci injective modules are
Harmanci injective.
\item[(4)] The class of Harmanci injective modules is closed
under extensions.
\end{enumerate}
\end{lem}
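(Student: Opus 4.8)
The four properties all concern the behaviour of the functor $\operatorname{Ext}^1_R(N,-)$ under direct products, finite sums, summands, and extensions, restricted to those $N$ satisfying $\operatorname{Tor}^R_1(N,E({}_RR))=0$. The key is that Harmanci injectivity is defined by a vanishing condition $\operatorname{Ext}^1_R(N,M)=0$ that must hold \emph{uniformly} over this fixed class of test modules $N$. I would therefore fix an arbitrary right $R$-module $N$ with $\operatorname{Tor}^R_1(N,E({}_RR))=0$ throughout and verify in each case that $\operatorname{Ext}^1_R(N,-)$ vanishes on the module in question. Each item then reduces to a standard homological fact about $\operatorname{Ext}^1$.

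\smallskip
\emph{Carrying out the steps.} For (1), I would use the natural isomorphism $\operatorname{Ext}^1_R\big(N,\prod_i M_i\big)\cong\prod_i\operatorname{Ext}^1_R(N,M_i)$, valid because $\operatorname{Hom}_R(N,-)$ commutes with products and hence so do its derived functors; since each factor vanishes for Harmanci injective $M_i$, the product does too. Item (2) is an immediate special case, since a finite direct sum coincides with the finite direct product. For (3), I would write a direct summand $M$ of a Harmanci injective module $L$ as $L\cong M\oplus M'$, so that $\operatorname{Ext}^1_R(N,L)\cong\operatorname{Ext}^1_R(N,M)\oplus\operatorname{Ext}^1_R(N,M')$; the left side vanishes by hypothesis, forcing the summand $\operatorname{Ext}^1_R(N,M)=0$. (Alternatively, (3) follows from (2) once one notes summands are retracts, but the direct-sum splitting of $\operatorname{Ext}$ is cleanest.)

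\smallskip
For (4), the extension-closure property, I would take a short exact sequence $0\to M'\to M\to M''\to 0$ with $M'$ and $M''$ Harmanci injective and show $M$ is Harmanci injective. Applying the contravariant functor $\operatorname{Hom}_R(N,-)$ with our fixed $N$ gives the long exact sequence
\[
\cdots\to\operatorname{Ext}^1_R(N,M')\to\operatorname{Ext}^1_R(N,M)\to\operatorname{Ext}^1_R(N,M'')\to\cdots.
\]
The outer terms vanish since $M'$ and $M''$ are Harmanci injective and $N$ lies in the test class, so exactness squeezes the middle term to $\operatorname{Ext}^1_R(N,M)=0$. As $N$ was arbitrary in the test class, $M$ is Harmanci injective.

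\smallskip
\emph{Main obstacle.} None of the four steps presents a genuine difficulty; each is a one-line consequence of the corresponding functorial property of $\operatorname{Ext}^1$. The only point requiring care is bookkeeping: one must quantify correctly, fixing a single arbitrary test module $N$ with $\operatorname{Tor}^R_1(N,E({}_RR))=0$ before invoking the defining vanishing condition, rather than allowing $N$ to vary between the hypothesis and the conclusion. Once that quantifier discipline is maintained, all four parts follow uniformly from the product formula and the long exact sequence for $\operatorname{Ext}$.
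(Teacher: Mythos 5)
Your proof is correct and takes essentially the same route as the paper's: both fix an arbitrary test module $N$ with $\operatorname{Tor}^R_1(N,E({}_RR))=0$, then use the isomorphism $\operatorname{Ext}^1_R\bigl(N,\prod_i M_i\bigr)\cong\prod_i\operatorname{Ext}^1_R(N,M_i)$ for (1)--(3) and the long exact sequence obtained from $\operatorname{Hom}_R(N,-)$ for (4). (One terminological slip: $\operatorname{Hom}_R(N,-)$ is covariant, not contravariant, though the long exact sequence you invoke is the correct one.)
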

\begin{proof} (1) Let $\{M_i\}_{i\in I}$ be a collection of Harmanci injective right $R$-modules, $M = \prod\limits_{i\in I}M_i$
and  $N$ be a right $R$-module with Tor$^R_1(N, E(_RR)) = 0$. Then
Ext$^1_R(N,M_i) = 0$ for each $i\in I$. Since Ext$^1_R(N,M) \cong
\prod\limits_{i\in I}$ Ext$^1_R(N,M_i)$, we have Ext$^1_R(N,M) = 0$.\\
(2) It is clear by (1).\\(3) Let $M = M_1\oplus M_2$ be a Harmanci
injective right $R$-module. Let Tor$^R_1(N, E(_RR)) = 0$ for some
right $R$-module $N$. By hypothesis, Ext$^1_R(N,M) = 0$. Since
Ext$^1_R(N, M) \cong$ Ext$^1_R(N, M_1)\oplus$ Ext$^1_R(N, M_2)$,
Ext$^1_R(N,M_1) =$ Ext$^1_R(N,M_2)= 0$.\\
(4) Let $0\rightarrow M \rightarrow N\rightarrow K\rightarrow 0$
be an exact sequence of right $R$-modules with $M$ and $K$
Harmanci injective. Let $L$ be a right $R$-module with Tor$^R_1(L,
E(_RR)) = 0$. Then we obtain $\cdots \rightarrow $
Ext$_R^1(L,M)\rightarrow$ Ext$_R^1(L,N)\rightarrow$
Ext$_R^1(L,K)\rightarrow \cdots$. By hypothesis, Ext$_R^1(L,M)=$
Ext$_R^1(L,K)=0$. This yields Ext$_R^1(L,N)=0$.
\end{proof}

\begin{prop} Let $M$ be a Harmanci injective right $R$-module and $K$ a
submodule of $M$ with injective dimension at most $1$. Then $M/K$
is Harmanci injective.
\end{prop}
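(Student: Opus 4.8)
The statement to prove is: if $M$ is Harmanci injective and $K\subseteq M$ has injective dimension at most $1$, then $M/K$ is Harmanci injective. The plan is to take an arbitrary right $R$-module $N$ with $\mathrm{Tor}^R_1(N,E(_RR))=0$ and show $\mathrm{Ext}^1_R(N,M/K)=0$, which is exactly the defining condition for $M/K$ to be Harmanci injective. The natural tool is the long exact sequence in $\mathrm{Ext}$ coming from the short exact sequence $0\rightarrow K\rightarrow M\rightarrow M/K\rightarrow 0$.

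First I would apply the contravariant functor $\mathrm{Hom}_R(N,-)$ to $0\rightarrow K\rightarrow M\rightarrow M/K\rightarrow 0$ to obtain the exact sequence
\[
\cdots \rightarrow \mathrm{Ext}^1_R(N,M)\rightarrow \mathrm{Ext}^1_R(N,M/K)\rightarrow \mathrm{Ext}^2_R(N,K)\rightarrow \cdots.
\]
Since $M$ is Harmanci injective and $\mathrm{Tor}^R_1(N,E(_RR))=0$, the defining property gives $\mathrm{Ext}^1_R(N,M)=0$, so the left term vanishes. Hence the map $\mathrm{Ext}^1_R(N,M/K)\rightarrow \mathrm{Ext}^2_R(N,K)$ is injective, and it suffices to show $\mathrm{Ext}^2_R(N,K)=0$.

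The key step is to use the hypothesis $\mathrm{id}(K)\leq 1$. For a module of injective dimension at most $1$, all higher Ext groups into it vanish beyond degree $1$; in particular $\mathrm{Ext}^2_R(N,K)=0$ for every $N$. I would invoke the standard homological characterization of injective dimension (for instance via a short injective resolution $0\rightarrow K\rightarrow I^0\rightarrow I^1\rightarrow 0$ and dimension shifting) to justify this directly, without needing any special property of $N$ beyond its being a module. Combining the injectivity of $\mathrm{Ext}^1_R(N,M/K)\hookrightarrow \mathrm{Ext}^2_R(N,K)=0$ forces $\mathrm{Ext}^1_R(N,M/K)=0$, completing the argument.

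I do not anticipate a genuine obstacle here: the proof is a clean two-ingredient assembly, with the vanishing $\mathrm{Ext}^1_R(N,M)=0$ supplied by Harmanci injectivity of $M$ and the vanishing $\mathrm{Ext}^2_R(N,K)=0$ supplied by $\mathrm{id}(K)\leq 1$. The only point requiring mild care is to write down the relevant segment of the long exact sequence correctly, making sure the term to the left of $\mathrm{Ext}^1_R(N,M/K)$ is $\mathrm{Ext}^1_R(N,M)$ and the term to its right is $\mathrm{Ext}^2_R(N,K)$, so that both neighbouring terms are the ones we can annihilate.
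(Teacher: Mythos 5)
Your proof is correct and is essentially identical to the paper's own argument: apply $\mathrm{Hom}_R(N,-)$ to $0\rightarrow K\rightarrow M\rightarrow M/K\rightarrow 0$, kill $\mathrm{Ext}^1_R(N,M)$ by Harmanci injectivity and $\mathrm{Ext}^2_R(N,K)$ by $\mathrm{id}(K)\leq 1$. One trivial slip: $\mathrm{Hom}_R(N,-)$ is the \emph{covariant} Hom functor, not contravariant, though the exact sequence you wrote is the correct one.
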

\begin{proof} Consider the exact sequence $0\rightarrow K\rightarrow M\rightarrow M/K \rightarrow
0$. Then we have the long exact sequence $\cdots \rightarrow$
Ext$^1_R(N,M)\rightarrow $ Ext$^1_R(N,M/K)\rightarrow$
Ext$^2_R(N,K)\rightarrow \cdots $\\ by means of the functor
Hom$_R(N,-)$ where $N$ is a right $R$-module such that
Tor$^R_1(N,E(_RR))=0$. Since $M$ is Harmanci injective,
Ext$^1_R(N,M)=0$, also Ext$^2_R(N,K)=0$ by hypothesis. Therefore
Ext$^1_R(N,M/K)=0$.
\end{proof}

We now characterize  quotients of Harmanci injective modules being
Harmanci injective. Recall that a module is said to be {\it
h-divisible} if it is an epic image of an injective module. Let
$M$ be a right $R$-module and $K$ a submodule of $M$. We call $K$
an {\it $E(_RR)$-pure submodule} of $M$ if the sequence
$0\rightarrow K\otimes_R E(_RR)\rightarrow M\otimes_R E(_RR)$ is
exact. Obviously, every pure submodule of a right $R$-module is
$E(_RR)$-pure.

\begin{prop} The following are equivalent.
\begin{enumerate}
    \item[(1)] The class $\mathcal{HI}$  is closed
    under homomorphic images.
    \item[(2)] Every h-divisible right $R$-module is
    Harmanci injective.
    \item[(3)] Every module which belongs to $\mathcal{CMI}$  has projective dimension at most $1$.
    \item[(4)] Every $E(_RR)$-pure submodule of projective modules
    is projective.
\end{enumerate}
In this case, the projective dimension of a flat right $R$-module
is at most $1$, equivalently, pure submodules of projective
modules are also projective.
\end{prop}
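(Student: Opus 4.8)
The plan is to establish the chain of equivalences (1) $\Rightarrow$ (3) $\Rightarrow$ (4) $\Rightarrow$ (2) $\Rightarrow$ (1), leaning heavily on the cotorsion-theoretic dictionary that the paper has already set up. The key translation device is the definition of Harmanci injectivity together with the characterization in Lemma~\ref{equiv}: the modules $N$ with $\mathrm{Tor}^R_1(N,E(_RR))=0$ are exactly those whose character module is Matlis injective, i.e.\ the members of $\mathcal{CMI}$, and Harmanci injectivity of $M$ says precisely that $\mathrm{Ext}^1_R(N,M)=0$ for all such $N$. So $\mathcal{HI}=\mathcal{CMI}^\perp$, and I would state this identification at the outset so every subsequent step reduces to a statement about $\mathrm{Ext}$ and $\mathrm{Tor}$ vanishing.

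For (1) $\Rightarrow$ (3), I would argue as follows. Suppose $\mathcal{HI}$ is closed under homomorphic images. Given $N\in\mathcal{CMI}$, I want $\mathrm{pd}(N)\le 1$, i.e.\ $\mathrm{Ext}^2_R(N,M)=0$ for all modules $M$. Take any $M$ and embed it in an injective module $E$, giving a short exact sequence $0\to M\to E\to E/M\to 0$; here $E$ is injective hence Harmanci injective, and $E/M$ is a homomorphic image of a Harmanci injective module, hence Harmanci injective by hypothesis. Applying $\mathrm{Hom}_R(N,-)$ and using the long exact sequence, the terms $\mathrm{Ext}^1_R(N,E/M)$ and $\mathrm{Ext}^2_R(N,E)$ both vanish (the first because $N\in\mathcal{CMI}$ and $E/M\in\mathcal{HI}$, the second because $E$ is injective), so the connecting map forces $\mathrm{Ext}^2_R(N,M)=0$. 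For (3) $\Rightarrow$ (4): given an $E(_RR)$-pure submodule $K$ of a projective module $P$, the sequence $0\to K\otimes E(_RR)\to P\otimes E(_RR)$ is exact, which combined with the $\mathrm{Tor}$ long exact sequence for $0\to K\to P\to P/K\to 0$ yields $\mathrm{Tor}^R_1(P/K,E(_RR))=0$, i.e.\ $P/K\in\mathcal{CMI}$. By (3), $\mathrm{pd}(P/K)\le 1$, and since $0\to K\to P\to P/K\to 0$ is a projective presentation of $P/K$, its kernel $K$ must be projective.

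For (4) $\Rightarrow$ (2): let $M$ be h-divisible, so there is an epimorphism $E\twoheadrightarrow M$ with $E$ injective, giving $0\to K\to E\to M\to 0$. To show $M\in\mathcal{HI}$ I take $N\in\mathcal{CMI}$ and run $\mathrm{Hom}_R(N,-)$; the obstruction to $\mathrm{Ext}^1_R(N,M)=0$ lives in $\mathrm{Ext}^2_R(N,K)$, which I control by showing $\mathrm{pd}(N)\le 1$, and this is where I would need (4) to feed back into a projective-dimension bound --- so I expect the real work here is converting the $E(_RR)$-purity hypothesis of (4) into the projective-dimension conclusion that kills the second $\mathrm{Ext}$, probably by an argument symmetric to (3) $\Rightarrow$ (4). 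Finally (2) $\Rightarrow$ (1) is nearly immediate: any homomorphic image of a Harmanci injective module is in particular h-divisible when the source is injective, and more generally one reduces an arbitrary quotient to the h-divisible case via a projective (hence flat, hence well-behaved under $E(_RR)$) cover. The main obstacle I anticipate is the bookkeeping in the backward implications: each step hinges on transporting the vanishing of a single $\mathrm{Tor}^R_1(-,E(_RR))$ across a short exact sequence and then recognizing that the resulting dimension bound is exactly what trivializes the relevant $\mathrm{Ext}^2$. The concluding remark --- that flat modules then have projective dimension at most $1$ --- should follow because every flat module lies in $\mathcal{CMI}$ (indeed $\mathrm{Tor}^R_1(F,E(_RR))=0$ for flat $F$ as used in Proposition~\ref{cotor}), so (3) applies directly, and the ``equivalently'' clause restates this via ordinary purity since pure submodules are automatically $E(_RR)$-pure.
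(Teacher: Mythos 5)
Most of your chain is sound and uses the same toolkit as the paper: your (1) $\Rightarrow$ (3) merges the paper's steps (1) $\Rightarrow$ (2) and (2) $\Rightarrow$ (3) into one argument; your (3) $\Rightarrow$ (4) is the paper's, with your dimension-shifting finish (kernel of a projective presentation of a module of projective dimension at most $1$ is projective) a legitimate replacement for the paper's appeal to \cite[p.466, Ex.8.5(ii)]{R}; and in (4) $\Rightarrow$ (2) you correctly identify that the real content is (4) $\Rightarrow$ (3), proved symmetrically: for $N\in\mathcal{CMI}$ take a free presentation $0\to K\to F\to N\to 0$, note $\mathrm{Tor}^R_1(N,E(_RR))=0$ makes $K$ an $E(_RR)$-pure submodule of $F$, so $K$ is projective by (4) and $\mathrm{pd}(N)\le 1$. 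The concluding remark about flat modules is also handled correctly.

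The genuine gap is your (2) $\Rightarrow$ (1). Your argument only covers quotients of \emph{injective} modules; for a general Harmanci injective $M$ this does not apply, because Harmanci injective modules need not be injective (Theorem \ref{flat} together with Example \ref{inj hull-ex}), so a quotient $M/K$ is not exhibited as an epimorphic image of an injective module and h-divisibility is simply unavailable. The proposed repair --- ``reduce to the h-divisible case via a projective cover'' --- cannot be made to work: projective covers need not exist (only over perfect rings), and in any case a cover maps \emph{onto} $M/K$, which is the wrong direction for forcing $\mathrm{Ext}^1_R(N,M/K)=0$. The correct route, which is the paper's and requires only tools you already deployed, is (2) $\Rightarrow$ (3) $\Rightarrow$ (1). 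First, your own (1) $\Rightarrow$ (3) argument never used (1) except through the quotient $E(X)/X$, which is h-divisible; so verbatim it proves (2) $\Rightarrow$ (3): for any $X$ and $N\in\mathcal{CMI}$, the exact segment $\mathrm{Ext}^1_R(N,E(X)/X)\to \mathrm{Ext}^2_R(N,X)\to \mathrm{Ext}^2_R(N,E(X))$ has vanishing outer terms, whence $\mathrm{pd}(N)\le 1$. Then (3) $\Rightarrow$ (1): for $M$ Harmanci injective, $K\le M$ and $N\in\mathcal{CMI}$, the segment $\mathrm{Ext}^1_R(N,M)\to \mathrm{Ext}^1_R(N,M/K)\to \mathrm{Ext}^2_R(N,K)$ has both outer terms zero ($M$ Harmanci injective, $\mathrm{pd}(N)\le 1$), so $M/K$ is Harmanci injective. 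With this substitution your cycle closes; as written, it is broken at its last link.
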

\begin{proof} (1) $\Rightarrow$ (2) Obvious.\\
(2) $\Rightarrow$ (3) Let $N\in \mathcal{CMI}$ and $M$ any right
$R$-module. We claim that Ext$_R^2(N,M)=0$. Applying the functor
Hom$_R(N,-)$ to the exact sequence $0\rightarrow M \rightarrow
E(M)\rightarrow E(M)/M\rightarrow 0$, we have the exactness of

$\cdots \rightarrow $ Ext$_R^1(N, E(M)/M)\rightarrow$
Ext$_R^2(N,M)\rightarrow$ Ext$_R^2(N,E(M))\rightarrow \cdots$.
\\By (2), Ext$_R^1(N, E(M)/M)=0$, also the injectivity of $E(M)$
implies Ext$_R^2(N,E(M))=0$. Thus Ext$_R^2(N,M)=0$, as desired.\\
(3) $\Rightarrow$ (1) Let $M$ be a Harmanci injective right
$R$-module, $K$ a submodule of $M$ and $N$ a right $R$-module with
Tor$^R_1(N, E(_RR)) = 0$. Consider the exact sequence
$0\rightarrow K \rightarrow M\rightarrow M/K\rightarrow 0$. The
functor Hom$_R(N,-)$ yields the exact sequence

$\cdots \rightarrow $ Ext$_R^1(N, M)\rightarrow$
Ext$_R^1(N,M/K)\rightarrow$ Ext$_R^2(N,K)\rightarrow \cdots$.\\
Harmanci injectivity of $M$ and (3) imply Ext$_R^1(N, M)=$
Ext$_R^2(N,K)=0$. Therefore Ext$_R^1(N,M/K)=0$. \\
(3) $\Rightarrow$ (4) Let $P$ be a projective right $R$-module and
$K$ be an $E(_RR)$-pure submodule of $P$. Applying the functor
$-\otimes_R E(_RR)$ to the short exact sequence $0\rightarrow K
\rightarrow P \rightarrow P/K \rightarrow0$, we have  $\cdots
\rightarrow$ Tor$^R_1(P,E(_RR))\rightarrow $ Tor$^R_1(P/K,
E(_RR))\rightarrow K\otimes_R E(_RR)\rightarrow P\otimes_R
E(_RR)\rightarrow \cdots$. The module $P$ being projective and $K$
being an $E(_RR)$-pure submodule of $P$ imply Tor$^R_1(P/K,
E(_RR))=0$. Then pd$(P/K)\leq 1$ by (3). If pd$(K)>0=$ pd$(P)$,
then pd$(P/K)\geq 2$ by \cite[p.466, Ex.8.5(ii)]{R}. This
contradiction yields pd$(K)=0$, i.e, $K$ is
projective.\\
(4) $\Rightarrow$ (3) Let $M\in \mathcal{CMI}$. There exists a
short exact sequence $0\rightarrow K \rightarrow F \rightarrow M
\rightarrow 0$ where $F$ is free. Since Tor$^R_1(M, E(_RR))=0$,
$K$ is an $E(_RR)$-pure submodule of $F$. By (4), $K$ is
projective. In the light of \cite[p.466, Ex.8.5(iii)]{R},
pd$(M)\leq 1$.
\end{proof}

Recall that a class $\mathcal{C}$ is called {\it coresolving}
provided that $\mathcal{C}$ is closed under extensions, every
injective module is in $\mathcal{C}$ and $C \in \mathcal{C}$
whenever $0 \rightarrow A \rightarrow B \rightarrow C \rightarrow
0$ is a short exact sequence such that $A,B \in \mathcal{C}$. In
the light of  the fact that injectivity implies Harmanci
injectivity and Lemma \ref{basic}(4), we now address the following
question: When is the class $\mathcal{HI}$ coresolving?

\begin{prop} The following are equivalent.
\begin{enumerate}
\item[(1)] For every exact sequence $0\rightarrow M \rightarrow N\rightarrow K\rightarrow
0$ of right $R$-modules, if $M$ and $N$ are Harmanci injective,
then $K$ is Harmanci injective.
\item[(2)] If $M$ is a Harmanci injective right $R$-module,
then $E(M)/M$ is Harmanci injective.
\item[(3)] If $M$ is a Harmanci injective right $R$-module,
then for any right $R$-module $N$, being Tor$^R_1(N, E(_RR)) = 0$
implies Ext$^n_R(N,M) = 0$  where $n\geq 2$.
\end{enumerate}
In this case, $\mathcal{HI}$ is a coresolving class.
\end{prop}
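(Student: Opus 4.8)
The plan is to prove the cyclic chain of implications $(1)\Rightarrow(2)\Rightarrow(3)\Rightarrow(1)$, which is the natural route since each successive condition reformulates the closure property in terms that feed into the next. The implication $(1)\Rightarrow(2)$ should be nearly immediate: given a Harmanci injective $M$, consider the short exact sequence $0\rightarrow M\rightarrow E(M)\rightarrow E(M)/M\rightarrow 0$; here $M$ is Harmanci injective by assumption and $E(M)$ is injective, hence Harmanci injective, so condition (1) forces $E(M)/M$ to be Harmanci injective.

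For $(2)\Rightarrow(3)$ I would argue by induction on $n\geq 2$, using the dimension-shifting trick along the injective envelope. Fix a Harmanci injective $M$ and a right $R$-module $N$ with $\text{Tor}^R_1(N,E(_RR))=0$. Applying $\text{Hom}_R(N,-)$ to $0\rightarrow M\rightarrow E(M)\rightarrow E(M)/M\rightarrow 0$ yields a long exact sequence in which, for each $n\geq 1$, one has $\text{Ext}^n_R(N,E(M)/M)\rightarrow \text{Ext}^{n+1}_R(N,M)\rightarrow \text{Ext}^{n+1}_R(N,E(M))$. Since $E(M)$ is injective the right-hand term vanishes for $n+1\geq 2$, giving a surjection $\text{Ext}^n_R(N,E(M)/M)\twoheadrightarrow\text{Ext}^{n+1}_R(N,M)$. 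For the base case $n=2$ I use that $E(M)/M$ is Harmanci injective by (2), so $\text{Ext}^1_R(N,E(M)/M)=0$, whence $\text{Ext}^2_R(N,M)=0$. For the inductive step, I apply the induction hypothesis to the Harmanci injective module $E(M)/M$ to kill $\text{Ext}^n_R(N,E(M)/M)$, which then forces $\text{Ext}^{n+1}_R(N,M)=0$.

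The implication $(3)\Rightarrow(1)$ is where the real content lies, and I expect it to be the main obstacle. Given an exact sequence $0\rightarrow M\rightarrow N\rightarrow K\rightarrow 0$ with $M,N$ Harmanci injective, I want to show $K$ is Harmanci injective; so let $L$ be a right $R$-module with $\text{Tor}^R_1(L,E(_RR))=0$ and apply $\text{Hom}_R(L,-)$ to obtain the segment $\text{Ext}^1_R(L,N)\rightarrow\text{Ext}^1_R(L,K)\rightarrow\text{Ext}^2_R(L,M)$. Harmanci injectivity of $N$ gives $\text{Ext}^1_R(L,N)=0$, and condition (3) applied to the Harmanci injective module $M$ with $n=2$ gives $\text{Ext}^2_R(L,M)=0$; exactness then squeezes $\text{Ext}^1_R(L,K)=0$, so $K$ is Harmanci injective.

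Finally, to conclude that $\mathcal{HI}$ is coresolving under any of these equivalent conditions, I would simply collect the requirements from the definition: the class contains every injective module (noted already in the excerpt) and is closed under extensions by Lemma \ref{basic}(4), while condition (1) supplies exactly the remaining cokernel-closure axiom. The only subtlety to flag is making sure the index conventions in the long exact sequences line up so that the injective term $\text{Ext}^{\geq 2}_R(N,E(M))$ genuinely vanishes and the surjectivity used in the induction is valid; once that bookkeeping is pinned down, the argument is routine.
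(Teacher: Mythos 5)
Your proposal is correct and follows essentially the same route as the paper: the same cyclic chain $(1)\Rightarrow(2)\Rightarrow(3)\Rightarrow(1)$, with $(2)\Rightarrow(3)$ proved by the identical dimension-shifting induction along $0\rightarrow M\rightarrow E(M)\rightarrow E(M)/M\rightarrow 0$ and $(3)\Rightarrow(1)$ by the same three-term exact sequence argument using $n=2$. Your explicit bookkeeping of the induction (the surjection $\mathrm{Ext}^n_R(N,E(M)/M)\twoheadrightarrow\mathrm{Ext}^{n+1}_R(N,M)$ applied to the Harmanci injective module $E(M)/M$) is just a more careful writing of the paper's ``continuing in this way'' step.
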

\begin{proof} (1) $\Rightarrow$ (2) Clear.\\
(2) $\Rightarrow$ (3) Let $M$ and $N$ be right $R$-modules with
$M$  Harmanci injective and  Tor$^R_1\big(N, E(_RR)\big) = 0$. If
we apply the functor Hom$_R(N,-)$ to the exact sequence
$0\rightarrow M \rightarrow E(M)\rightarrow E(M)/M\rightarrow 0$,
then we obtain

$\cdots \rightarrow $ Ext$_R^1(N, E(M)/M)\rightarrow$
Ext$_R^2(N,M)\rightarrow$ Ext$_R^2(N,E(M))\rightarrow \cdots$.\\
By (2), Ext$_R^1(N, E(M)/M)= 0$ and by the injectivity of $E(M)$,
Ext$_R^2(N,E(M))=0$. Hence Ext$_R^2(N,M)=0$. Since $E(M)/M$ is
Harmanci injective, by the similar discussion above, we have
Ext$_R^2(N,E(M)/M)=0$. This yields Ext$_R^3(N,M)=0$. Continuing in
this way, by induction on $n\geq 2$, Ext$^n_R(N,M) = 0$ is
obtained.\\
(3) $\Rightarrow$ (1) Consider an exact sequence $0\rightarrow M
\rightarrow N\rightarrow K\rightarrow 0$ of right $R$-modules with
$M$ and $N$ Harmanci injective. Let $L$ be a right $R$-module with
Tor$^R_1(L, E(_RR)) = 0$. Then we have the exact sequence $\cdots
\rightarrow $ Ext$_R^1(L,N)\rightarrow$ Ext$_R^1(L,K)\rightarrow$
Ext$_R^2(L,M)\rightarrow \cdots$. By hypothesis, Ext$_R^1(L,N)=$
Ext$_R^2(L,M)=0$. It follows that Ext$_R^1(L,K)=0$, establishing
the result.
\end{proof}

We now investigate when the character module of a Harmanci
injective module is Matlis injective. We need the next lemma for
this investigation. By this means, some properties of the class
$\mathcal{CMI}$ are acquired.

\begin{lem}\label{C-class} The following hold.
\begin{enumerate}
    \item[(1)] $\mathcal{CMI}$ is closed under extensions.
    \item[(2)] $\mathcal{CMI}$ is closed under direct summands.
    \item[(3)] $\mathcal{CMI}$ is closed under direct sums.
    \item[(4)] $\mathcal{CMI}$ is closed under pure quotients.
    \item[(5)] $\mathcal{CMI}$ is a covering class.
    \item[(6)] The kernel of every $\mathcal{CMI}$-cover is  Harmanci injective.
    \item[(7)] Every right $R$-module has a $\mathcal{CMI}$-cover
    with the unique mapping property if and only if for every exact sequence
    $A\rightarrow B \rightarrow C\rightarrow 0$ of right
    $R$-modules, being $A, B\in \mathcal{CMI}$ implies $C\in
    \mathcal{CMI}$.
\end{enumerate}
\end{lem}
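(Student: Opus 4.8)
The plan is to reduce every item to the single identity, coming from \cite[Theorem 3.2.1]{Enochs-Jenda} and Lemma~\ref{equiv}, that a right $R$-module $N$ lies in $\mathcal{CMI}$ if and only if Tor$^R_1(N,E(_RR))=0$. Writing $E^{+}=\,$Hom$_{\Bbb Z}(E(_RR),\Bbb Q/\Bbb Z)$ and applying the companion instance of that isomorphism, Hom$_{\Bbb Z}($Tor$^R_1(N,E(_RR)),\Bbb Q/\Bbb Z)\cong\,$Ext$^1_R(N,E^{+})$, the same condition reads $\mathcal{CMI}={}^{\perp}\{E^{+}\}$. Since $E^{+}$ is Harmanci injective by Proposition~\ref{inj hull}, and $\mathcal{HI}=\mathcal{CMI}^{\perp}$ by definition, one checks $\mathcal{CMI}={}^{\perp}\mathcal{HI}$ (the inclusion $\mathcal{CMI}\subseteq{}^{\perp}\mathcal{HI}$ is automatic, and ${}^{\perp}\mathcal{HI}\subseteq{}^{\perp}\{E^{+}\}=\mathcal{CMI}$ because $E^{+}\in\mathcal{HI}$), so $(\mathcal{CMI},\mathcal{HI})$ is a cotorsion theory. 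With this dictionary, (1)--(4) are the usual long-exact-sequence manipulations: for (1) I read off Tor$^R_1(B,E(_RR))=0$ from the Tor sequence of $0\to A\to B\to C\to 0$; (2) and (3) use that Tor$^R_1(-,E(_RR))$ commutes with finite and arbitrary direct sums; and for (4), given a pure exact sequence $0\to A\to B\to C\to 0$ with $B\in\mathcal{CMI}$, purity makes $A\otimes_R E(_RR)\to B\otimes_R E(_RR)$ monic, which forces the connecting map Tor$^R_1(C,E(_RR))\to A\otimes_R E(_RR)$ to be simultaneously injective and zero, whence Tor$^R_1(C,E(_RR))=0$.

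For (5) the strategy is to feed the cotorsion theory $(\mathcal{CMI},\mathcal{HI})$ into the approximation machinery of \cite{X}. The pair is cogenerated by the single module $E^{+}$, which is moreover pure injective, being a character module; this is exactly the setting in which the cotorsion theory is complete, so that $\mathcal{CMI}$ is special precovering. The extra ingredient is that $\mathcal{CMI}$ is closed under direct limits: because Tor$^R_1(-,E(_RR))$ commutes with direct limits, a direct limit of modules annihilating Tor again annihilates Tor. A complete cotorsion theory whose left class is closed under direct limits is perfect, and in particular every module admits a $\mathcal{CMI}$-cover. I expect the completeness step — equivalently, the actual construction of $\mathcal{CMI}$-precovers — to be the main obstacle, since, unlike the case of a theory generated by a set, completeness for a class cogenerated by a module is what must be extracted carefully from \cite{X}; once it is in hand, the direct-limit closure is the soft part that upgrades precovers to covers.

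Items (6) and (7) I would then settle by general cover theory, both presupposing the covers produced in (5). For (6), every free module lies in $\mathcal{CMI}$ (free modules are flat), so a $\mathcal{CMI}$-cover $\phi\colon C\to M$ is surjective; as $\mathcal{CMI}$ is closed under extensions by (1), Wakamatsu's Lemma gives Ext$^1_R(C',\ker\phi)=0$ for every $C'\in\mathcal{CMI}$, that is $\ker\phi\in\mathcal{CMI}^{\perp}=\mathcal{HI}$, so the kernel is Harmanci injective. For (7), I would appeal to the criterion of \cite{Ding}: for a covering class the covers can be chosen with the unique mapping property precisely when the class is closed under cokernels of its own morphisms, which is exactly the stated closure that $A\to B\to C\to 0$ with $A,B\in\mathcal{CMI}$ forces $C\in\mathcal{CMI}$. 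The forward direction factors a presentation of such a cokernel through the cover and invokes uniqueness to place the cokernel in $\mathcal{CMI}$; the converse combines closure under cokernels with the minimality of the cover to annihilate the difference of two liftings, yielding uniqueness.
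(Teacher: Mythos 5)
Your items (1)--(4), (6) and the two directions of (7) track the paper's own proof almost exactly: (1)--(4) are the same long-exact-sequence manipulations with $-\otimes_R E(_RR)$ applied to the description $\mathcal{CMI}=\{N_R \mid \mbox{Tor}^R_1(N,E(_RR))=0\}$; (6) is the same combination of closure under extensions and Wakamatsu's Lemma (your extra remark that covers are surjective is harmless but not needed for Wakamatsu); and your sketch of (7) is precisely the paper's pair of diagram chases --- lift $B\rightarrow C$ through a $\mathcal{CMI}$-cover of $C$ and use the unique mapping property to split $C$ off, respectively factor a cover $f\colon F\rightarrow M$ through $F/\mbox{Im}(h_1-h_2)\in\mathcal{CMI}$ and use minimality to force $h_1=h_2$. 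Your preliminary identification $\mathcal{CMI}={}^\perp\{E^+\}={}^\perp\mathcal{HI}$ is also correct (it is the paper's Theorem \ref{pair}, proved there by the same computation), so using it up front is legitimate.

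The genuine gap is in (5), and it is exactly where you flag it. The paper's proof is one line: by (3) and (4) the class $\mathcal{CMI}$ is closed under direct sums and pure quotients, hence it is covering by \cite[Theorem 2.5]{HJ} --- a reference you never invoke, even though you have already verified precisely the hypotheses it needs. Your alternative route instead rests on the assertion that a cotorsion pair cogenerated by a single pure-injective module (here $E^+$) is complete, after which closure of $\mathcal{CMI}$ under direct limits upgrades completeness to perfection; the second half of this is fine (it is Enochs' theorem, \cite[Corollary 1.19]{T}, and is how the paper obtains envelopes in Theorem \ref{properties}), but the first half is an unproved black box. It is not in \cite{X}, whose cotorsion-theoretic machinery essentially concerns the flat/cotorsion pair; the statement you want --- every cotorsion pair cogenerated by a class of pure-injective modules is perfect --- is a theorem of G\"obel and Trlifaj, and its published proof is not an independent completeness argument at all: it shows the left class is closed under direct sums, pure epimorphic images and direct limits and then runs the Bican--El Bashir--Enochs/Holm--J{\o}rgensen covering argument, i.e., the very argument you are trying to bypass. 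So as written, (5) is unproven, and (6)--(7), which presuppose the covers produced in (5), are conditional on it. The repair is immediate: drop the completeness detour and cite \cite[Theorem 2.5]{HJ} applied to your (3) and (4).
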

\begin{proof} Note that by Lemma \ref{equiv}, $\mathcal{CMI}=\{N_R \mid $ Hom$_{\Bbb Z}(N, \Bbb Q/\Bbb Z)$ is Matlis injective$\}=
\{N_R \mid $ Tor$^R_1(N, E(_RR)) = 0\}$.\\
(1) Let $0\rightarrow A\rightarrow B \rightarrow C\rightarrow 0$
be an exact sequence of right $R$-modules with $A, C\in
\mathcal{CMI}$. We claim that $B\in \mathcal{CMI}$. If we apply
the functor $-\otimes_R E(_RR)$ to the sequence, we get the long
exact sequence
$$\cdots \rightarrow \mbox{Tor}^R_1(A, E(_RR)) \rightarrow
\mbox{Tor}^R_1(B, E(_RR)) \rightarrow \mbox{Tor}^R_1(C, E(_RR))
\rightarrow A \otimes_R E(_RR)\rightarrow \cdots.$$ Since $A, C\in
\mathcal{CMI}$, Tor$^R_1(A, E(_RR))=$ Tor$^R_1(C, E(_RR))=0$. This
implies that Tor$^R_1(B, E(_RR))=0$. Hence $B\in \mathcal{CMI}$,
as
desired.\\
(2) Let $N\in \mathcal{CMI}$ and assume that $N$ has a
decomposition $N=K\oplus L$. Then $0=$ Tor$^R_1(N, E(_RR))\cong $
Tor$^R_1(K, E(_RR))\oplus$ Tor$^R_1(L, E(_RR))$. Hence Tor$^R_1(K,
E(_RR))=$
Tor$^R_1(L, E(_RR))=0$. Hence $K, L\in \mathcal{CMI}$.\\
(3) Let $\{M_i\}_{i\in I}$ be a family of modules for an index set
$I$ with $M_i\in \mathcal{CMI}$ for each $i\in I$. It is known
that Tor$^R_1(\bigoplus \limits_{i\in I} M_i, E(_RR))\cong
\bigoplus \limits_{i\in I}$ Tor$^R_1(M_i, E(_RR))$. For each $i\in
I$, being Tor$^R_1(M_i, E(_RR))=0$ implies Tor$^R_1(\bigoplus
\limits_{i\in I} M_i, E(_RR))=0$. Therefore $\bigoplus
\limits_{i\in I} M_i\in
\mathcal{CMI}$.\\
(4) Let $M\in \mathcal{CMI}$ and $N$ a pure submodule of $M$. We
show that $M/N$ belongs to $\mathcal{CMI}$. If we apply the
functor $-\otimes_{R}E(_RR)$ to the exact sequence $0\rightarrow
N\rightarrow M \rightarrow M/N\rightarrow 0$, the long exact
sequence $\cdots \rightarrow $ Tor$^R_1(M,E(_RR))\rightarrow $
Tor$^R_1(M/N,E(_RR))\rightarrow N\otimes_R E(_RR)\rightarrow
M\otimes_R E(_RR)\rightarrow M/N\otimes_R E(_RR)\rightarrow 0$ is
obtained. Since $M\in \mathcal{CMI}$ and $N\otimes_R
E(_RR)\rightarrow M\otimes_R E(_RR)$ is monic, Tor$^R_1(M/N,E(_RR))=0$. \\
(5) Clear by (3), (4) and \cite[Theorem 2.5]{HJ}. \\
(6) It follows from (1) and Wakamatsu's Lemma \cite[Lemma
2.1.1]{X}. \\
(7) Assume that every right $R$-module has a $\mathcal{CMI}$-cover
with the unique mapping property and consider an exact sequence
$A\overset{f}\rightarrow B \overset{g}\rightarrow C\rightarrow 0$
where $A, B\in \mathcal{CMI}$. Let $h\colon D\rightarrow C$ be a
$\mathcal{CMI}$-cover of $C$. Then there exists a homomorphism
$\alpha\colon B\rightarrow D$ such that $h\alpha = g$ as shown in
the following diagram.
\begin{center}
$\xymatrix{ &&D\ar[d]_h&\\
A \ar[r]_f &B\ar[r]_g \ar@{.>}[ur]^\alpha
&C\ar[r]\ar@{.>}@/_/[u]_\beta&0\\
 } $
\end{center}
Hence $h\alpha f=gf=0$ because of the exactness of the sequence.
The unique mapping property and being $h(\alpha f)=h0$ imply
$\alpha f=0$. Thus Ker$g=$ Im$f\subseteq $ Ker$\alpha$. So Factor
Theorem yields a homomorphism $\beta\colon C\rightarrow D$ such
that $\beta g=\alpha$. Then $h\beta g=h\alpha=g=1_Cg$, and so $h
\beta=1_C$ because $g$ is an epimorphism. It follows that $D=$
Ker$h~\oplus $ Im$\beta$ and  $\beta$ is a monomorphism. Thus
$C\cong $ Im$\beta$. Being $D\in \mathcal{CMI}$ implies $C\in
\mathcal{CMI}$ by (2). Conversely, let $M$ be a right $R$-module.
By (5), there is a $\mathcal{CMI}$-cover $f\colon F\rightarrow M$
of $M$. Suppose that for any $G\in \mathcal{CMI}$ and  $g\colon
G\rightarrow M$, there exist $h_1, h_2 \colon G\rightarrow F$ such
that $fh_1=fh_2=g$. Then $f(h_1-h_2)=0$, so Im$(h_1-h_2)\subseteq
$ Ker$f$. Hence there is a homomorphism $\alpha \colon
F/$Im$(h_1-h_2)\rightarrow M$ with $\alpha \pi=f$ by the Factor
Theorem where $\pi\colon F\rightarrow F/$Im$(h_1-h_2)$ is the
natural projection. On the other hand, the exactness of
$G\rightarrow F\rightarrow F/$Im$(h_1-h_2)\rightarrow 0$ implies
$F/$Im$(h_1-h_2)\in \mathcal{CMI}$ by hypothesis. It follows that
there exists $\beta\colon F/$Im$(h_1-h_2)\rightarrow F$ with
$f\beta=\alpha$. Thus $f(\beta\pi)=\alpha\pi=f$. Since $f$ is a
$\mathcal{CMI}$-cover, $\beta\pi$ is an isomorphism. This yields
$\pi$ is a monomorphism, and so Ker$\pi=$ Im$(h_1-h_2)=0$. This
implies that $h_1=h_2$. This completes the proof.
\end{proof}

\begin{thm}\label{hom image}  Consider the
following conditions.
\begin{enumerate}
    \item[(1)] $\mathcal{CMI}$ is closed under homomorphic images.
    \item[(2)] Every Harmanci injective right $R$-module belongs to $\mathcal{CMI}$.
\end{enumerate}
Then {\rm (1)} $\Rightarrow$ {\rm (2)}. Moreover, if every
$\mathcal{CMI}$-cover satisfies the unique mapping property, then
{\rm (2)} $\Rightarrow$ {\rm (1)}.
\end{thm}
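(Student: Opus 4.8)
The plan is to work throughout with the explicit description $\mathcal{CMI}=\{N_R\mid \mathrm{Tor}^R_1(N,E(_RR))=0\}$ recorded in Lemma \ref{equiv}, together with the key observation that every free module already lies in $\mathcal{CMI}$: indeed $R\in\mathcal{CMI}$ since $R_R$ is flat, and $\mathcal{CMI}$ is closed under direct sums by Lemma \ref{C-class}(3), so any free right $R$-module belongs to $\mathcal{CMI}$. This single fact is what makes both implications short.

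For (1) $\Rightarrow$ (2), let $M$ be a Harmanci injective right $R$-module. I would choose a free module $F$ together with an epimorphism $F\twoheadrightarrow M$ (every module is a homomorphic image of a free one). Since $F\in\mathcal{CMI}$ and $\mathcal{CMI}$ is closed under homomorphic images by (1), it follows at once that $M\in\mathcal{CMI}$. Note that this argument really shows that (1) forces $\mathcal{CMI}=\mathrm{Mod}\text{-}R$, so Harmanci injectivity of $M$ is not consumed; but for the stated implication this is exactly what is needed.

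For (2) $\Rightarrow$ (1), assuming every $\mathcal{CMI}$-cover satisfies the unique mapping property, I would first translate the hypothesis through Lemma \ref{C-class}(7). Every module has a $\mathcal{CMI}$-cover by Lemma \ref{C-class}(5), and each such cover has the unique mapping property, so Lemma \ref{C-class}(7) delivers the cokernel-closure property: for every exact sequence $A\to B\to C\to 0$ with $A,B\in\mathcal{CMI}$ one has $C\in\mathcal{CMI}$. To deduce closure under homomorphic images, I would take $N\in\mathcal{CMI}$ and an epimorphism $g\colon N\to N'$, pick a free module $F$ with an epimorphism $p\colon F\twoheadrightarrow\ker g$, and compose with the inclusion $\ker g\hookrightarrow N$ to obtain $F\to N\xrightarrow{g} N'\to 0$, which is exact because $\mathrm{im}(F\to N)=\ker g$. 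Since $F,N\in\mathcal{CMI}$, the cokernel-closure property yields $N'\in\mathcal{CMI}$, which is (1).

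The only step requiring care is the reduction via Lemma \ref{C-class}(7): the unique mapping property of covers must be converted correctly into the three-term closure statement, after which the sole trick is to prepend a free module (automatically in $\mathcal{CMI}$) to an arbitrary epimorphism so that it matches the pattern $A\to B\to C\to 0$. I expect no genuine obstacle. I would, however, flag that the unique mapping property already produces (1) on its own, so hypothesis (2) is not actually used in this direction; the theorem as stated is therefore a slightly conservative packaging of what the argument establishes.
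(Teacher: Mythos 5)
Your proof is correct, and while your argument for (1) $\Rightarrow$ (2) is essentially the paper's, your argument for (2) $\Rightarrow$ (1) takes a genuinely different route. For (1) $\Rightarrow$ (2), the paper writes the Harmanci injective module $M$ as a homomorphic image of a flat module $F$ and places $F$ in $\mathcal{CMI}$ because its character module is injective, hence Matlis injective; you use a free module instead and check membership via $\mathrm{Tor}^R_1(R,E(_RR))=0$ together with Lemma \ref{C-class}(3) --- a cosmetic difference. For (2) $\Rightarrow$ (1), the paper takes a $\mathcal{CMI}$-cover $f\colon F\rightarrow M$ of the given module $M$, invokes Wakamatsu's Lemma (Lemma \ref{C-class}(6)) to see that $\mathrm{Ker}\,f$ is Harmanci injective, feeds this into hypothesis (2) to conclude $\mathrm{Ker}\,f\in\mathcal{CMI}$, and only then applies the closure property of Lemma \ref{C-class}(7) to the sequence $\mathrm{Ker}\,f\rightarrow F\rightarrow M\rightarrow 0$. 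You bypass the Wakamatsu step entirely: Lemma \ref{C-class}(5) plus the unique-mapping-property hypothesis yields, via Lemma \ref{C-class}(7), closure of $\mathcal{CMI}$ under cokernels of maps between its members, and you then realize an arbitrary epimorphism $g\colon N\rightarrow N'$ with $N\in\mathcal{CMI}$ in the required pattern $F\rightarrow N\rightarrow N'\rightarrow 0$ by mapping a free module onto $\ker g$. What your route buys is a sharper conclusion, which you correctly flag: the unique-mapping-property hypothesis alone already forces (1) (indeed it forces $\mathcal{CMI}$ to be all of Mod-$R$), so hypothesis (2) is never needed in that direction. It is worth noting that the paper's proof shares this hidden feature --- it also never uses the assumption that $M$ is a homomorphic image of a module in $\mathcal{CMI}$, and so also secretly proves $\mathcal{CMI}=$ Mod-$R$ --- but unlike yours it does consume hypothesis (2) along the way, which is why the theorem is packaged as an implication (2) $\Rightarrow$ (1) rather than as the stronger unconditional statement your argument establishes.
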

\begin{proof} (1) $\Rightarrow$ (2) Let $M$ be a Harmanci injective right $R$-module.
Then $M$ is a homomorphic image of a flat module $F$. Since the
character module of $F$ is injective, so is Matlis injective,
$F\in \mathcal{CMI}$. Hence $M\in \mathcal{CMI}$ by (1).

Now assume that every $\mathcal{CMI}$-cover satisfies the unique
mapping property.\\
(2) $\Rightarrow$ (1) Let $M$ be a homomorphic image of a module
in $\mathcal{CMI}$. We claim that $M\in \mathcal{CMI}$. By Lemma
\ref{C-class}(5) and assumption, $M$ has a $\mathcal{CMI}$-cover
with the unique mapping property, say $f\colon F\rightarrow M$.
Then we have an exact sequence $0\rightarrow
\mbox{Ker}f\rightarrow F \overset{f}\rightarrow M\rightarrow 0$
with $F\in \mathcal{CMI}$. By Lemma \ref{C-class}(6),  Ker$f$ is
Harmanci injective. Hence (2) implies Ker$f\in \mathcal{CMI}$. By
taking into account of Lemma \ref{C-class}(7), we have $M\in
\mathcal{CMI}$.
\end{proof}

We close this section by observing some characterizations of
Harmanci injectivity.

\begin{prop}\label{exact} The following are equivalent for a right $R$-module $M$.
\begin{enumerate}
    \item[(1)] $M$ is Harmanci injective.
     \item[(2)] $M$ is injective relative to every exact sequence $0\rightarrow A\rightarrow B\rightarrow C \rightarrow
    0$ of right $R$-modules with $C\in \mathcal{CMI}$.
    \item[(3)] Every  right $R$-module $N\in  \mathcal{CMI}$  is projective relative to any exact
    sequence $0\rightarrow M\rightarrow B\rightarrow C \rightarrow
    0$ of right $R$-modules.
\end{enumerate}
\end{prop}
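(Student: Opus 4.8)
The plan is to first reformulate all three conditions in terms of the vanishing of $\mathrm{Ext}^1$. By Lemma \ref{equiv}, membership in $\mathcal{CMI}$ is exactly the condition $\mathrm{Tor}^R_1(N,E(_RR))=0$, so by definition $M$ is Harmanci injective if and only if $\mathrm{Ext}^1_R(N,M)=0$ for every $N\in\mathcal{CMI}$. The phrase ``$M$ is injective relative to $0\to A\to B\to C\to 0$'' means precisely that $\mathrm{Hom}_R(B,M)\to\mathrm{Hom}_R(A,M)$ is surjective, while ``$N$ is projective relative to $0\to M\to B\to C\to 0$'' means that $\mathrm{Hom}_R(N,B)\to\mathrm{Hom}_R(N,C)$ is surjective. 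Applying $\mathrm{Hom}_R(-,M)$ and $\mathrm{Hom}_R(N,-)$ respectively and reading off the connecting homomorphisms ties each of these to an $\mathrm{Ext}^1$ term, so the whole proposition reduces to diagram chasing in long exact sequences.

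For (1)$\Rightarrow$(2): given $0\to A\to B\to C\to 0$ with $C\in\mathcal{CMI}$, Harmanci injectivity yields $\mathrm{Ext}^1_R(C,M)=0$, and the long exact sequence for $\mathrm{Hom}_R(-,M)$ then forces $\mathrm{Hom}_R(B,M)\to\mathrm{Hom}_R(A,M)$ to be onto. For (2)$\Rightarrow$(1): I would fix $N\in\mathcal{CMI}$ and take a free presentation $0\to K\to F\to N\to 0$; since the cokernel $N$ lies in $\mathcal{CMI}$, relative injectivity makes $\mathrm{Hom}_R(F,M)\to\mathrm{Hom}_R(K,M)$ surjective, so in the long exact sequence the connecting map $\mathrm{Hom}_R(K,M)\to\mathrm{Ext}^1_R(N,M)$ is zero; as $F$ is free, $\mathrm{Ext}^1_R(F,M)=0$, whence $\mathrm{Ext}^1_R(N,M)=0$.

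For (1)$\Leftrightarrow$(3): the direction (1)$\Rightarrow$(3) is immediate, because $\mathrm{Ext}^1_R(N,M)=0$ for all $N\in\mathcal{CMI}$ makes the connecting map $\mathrm{Hom}_R(N,C)\to\mathrm{Ext}^1_R(N,M)$ vanish for any sequence $0\to M\to B\to C\to 0$, giving relative projectivity. For (3)$\Rightarrow$(1), I would fix $N\in\mathcal{CMI}$ and represent an arbitrary class of $\mathrm{Ext}^1_R(N,M)$ by an extension $0\to M\to B\to N\to 0$, so that $C=N\in\mathcal{CMI}$; relative projectivity of $N$ lifts $1_N$ through $B\to N$, splitting the sequence and showing the chosen class is zero. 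Since the class was arbitrary, $\mathrm{Ext}^1_R(N,M)=0$, and $M$ is Harmanci injective.

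The main obstacle, such as it is, will be bookkeeping rather than depth: I must make sure each relative (in)jectivity statement is matched to surjectivity of the correct induced $\mathrm{Hom}$ map, and that in each converse the test sequence places its $\mathcal{CMI}$-object in the slot required by the relevant relative property (the quotient $C$ in (2), the $B$-quotient $C=N$ in (3)). Once this dictionary between relative lifting/extension properties and vanishing connecting homomorphisms is fixed, each implication follows in one line from a long exact sequence, together with projectivity of the free module in (2)$\Rightarrow$(1) and splitting of the extension in (3)$\Rightarrow$(1).
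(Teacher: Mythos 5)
Your proposal is correct; every implication goes through. The overall strategy is the same as the paper's: translate Harmanci injectivity into the vanishing of $\mathrm{Ext}^1_R(N,M)$ for $N\in\mathcal{CMI}$, translate each relative (in)jectivity condition into surjectivity of the appropriate induced $\mathrm{Hom}$ map, and connect the two via long exact sequences. Your treatment of (1)$\Rightarrow$(2), (1)$\Rightarrow$(3), and (3)$\Rightarrow$(1) (splitting an arbitrary representative extension $0\to M\to B\to N\to 0$ by lifting $1_N$) matches the paper's argument and its declared dualization. The one place you diverge is (2)$\Rightarrow$(1): the paper applies condition (2) to the extension $0\to M\to X\to N\to 0$ \emph{itself} as the test sequence (so $A=M$, $C=N$), lifts $1_M$ to a retraction $g\colon X\to M$, and concludes the sequence splits, hence $\mathrm{Ext}^1_R(N,M)=0$; you instead apply (2) to a free presentation $0\to K\to F\to N\to 0$ and read off $\mathrm{Ext}^1_R(N,M)=0$ from the connecting homomorphism being zero together with $\mathrm{Ext}^1_R(F,M)=0$. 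Both arguments are valid and of comparable length: the paper's version makes (2)$\Leftrightarrow$(1) a transparent instance of ``$\mathrm{Ext}^1$ classifies extensions,'' and is symmetric with the proof of (3)$\Rightarrow$(1), while your presentation-based version has the mild advantage of invoking only the long exact sequence and vanishing of $\mathrm{Ext}^1$ on free modules, at the cost of introducing an auxiliary presentation.
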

\begin{proof} (1) $\Rightarrow$ (2) Assume that
$0\rightarrow A\rightarrow B\rightarrow C \rightarrow 0$ is an
exact sequence where  $C\in \mathcal{CMI}$. Being Ext$^1_R(C,M)=0$
gives rise to the exactness of Hom$_R(B,M)\rightarrow$
Hom$_R(A,M)\rightarrow 0$. Therefore $M$ is injective relative to
the aforementioned sequence. \\
(2) $\Rightarrow$ (1) Let $N\in \mathcal{CMI}$ be a right
$R$-module and $0\rightarrow M \overset{f}\rightarrow X
\rightarrow N\rightarrow 0$ an exact sequence for a right
$R$-module $X$. (2) yields the exactness of
Hom$_R(X,M)\rightarrow$ Hom$_R(M,M)\rightarrow 0$. Hence there
exists a homomorphism $g\colon X\rightarrow M$ such that $gf=1_M$.
It follows that the sequence $0\rightarrow M \rightarrow X
\rightarrow N\rightarrow 0$ is split. Therefore
Ext$^1_R(N,M)=0$.\\
(1) $\Leftrightarrow$ (3) It is a dual of the proof of (1)
$\Leftrightarrow$ (2).
\end{proof}

\begin{thm}\label{Hom} Let $R$ be a commutative ring. Then the following are
equivalent for an $R$-module $M$.\begin{enumerate}
    \item[(1)] $M$ is Harmanci injective.
    \item[(2)] Hom$_R(N,M)$ is Harmanci injective for every flat
    $R$-module $N$.
    \item[(3)] Hom$_R(N,M)$ is Harmanci injective for every
    projective $R$-module $N$.
    \item[(4)] Hom$_R(N,M)$ is Harmanci injective for every free
    $R$-module $N$.
\end{enumerate}
\end{thm}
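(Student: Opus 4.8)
The plan is to prove the cycle $(2)\Rightarrow(3)\Rightarrow(4)\Rightarrow(1)$ together with the single substantial implication $(1)\Rightarrow(2)$. The three implications in the cycle are immediate from the inclusions among the relevant classes: every free module is projective and every projective module is flat, so a statement quantified over all flat $N$ specializes to projective $N$ and then to free $N$, giving $(2)\Rightarrow(3)\Rightarrow(4)$; and $(4)\Rightarrow(1)$ follows by taking $N=R$, since $\mathrm{Hom}_R(R,M)\cong M$ as $R$-modules. Thus all the content lies in $(1)\Rightarrow(2)$.

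For $(1)\Rightarrow(2)$, assume $M$ is Harmanci injective and $N$ is flat; commutativity of $R$ guarantees that $\mathrm{Hom}_R(N,M)$ is again an $R$-module, so that Harmanci injectivity of it is meaningful. Fix an $R$-module $K$ with $\mathrm{Tor}^R_1(K,E(_RR))=0$, i.e. $K\in\mathcal{CMI}$; the goal is $\mathrm{Ext}^1_R(K,\mathrm{Hom}_R(N,M))=0$. The first step I would carry out is the auxiliary fact that $\mathcal{CMI}$ is closed under tensoring by flat modules, namely $K\otimes_R N\in\mathcal{CMI}$. By Lazard's theorem $N=\varinjlim F_\lambda$ is a direct limit of finitely generated free modules, so $K\otimes_R N\cong\varinjlim (K\otimes_R F_\lambda)$ with each $K\otimes_R F_\lambda\cong K^{\,n_\lambda}$. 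Since $\mathrm{Tor}^R_1(-,E(_RR))$ commutes with direct limits and with finite direct sums, $\mathrm{Tor}^R_1(K\otimes_R N,E(_RR))\cong\varinjlim\,\mathrm{Tor}^R_1(K,E(_RR))^{\,n_\lambda}=0$, whence $K\otimes_R N\in\mathcal{CMI}$. (Alternatively this follows from Lemma \ref{C-class}(3),(4), since a filtered colimit is a pure quotient of a direct sum.) Because $M$ is Harmanci injective and $K\otimes_R N\in\mathcal{CMI}$, we obtain $\mathrm{Ext}^1_R(K\otimes_R N,M)=0$.

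It remains to transfer this vanishing across the adjunction. Choosing a short exact sequence $0\to K'\to P_0\to K\to 0$ with $P_0$ free and applying $\mathrm{Hom}_R(-,\mathrm{Hom}_R(N,M))$ presents $\mathrm{Ext}^1_R(K,\mathrm{Hom}_R(N,M))$ as the cokernel of the map $\mathrm{Hom}_R(P_0,\mathrm{Hom}_R(N,M))\to\mathrm{Hom}_R(K',\mathrm{Hom}_R(N,M))$. Tensoring the same sequence with the flat module $N$ keeps it exact, and the tensor-hom adjunction identifies the displayed map with $\mathrm{Hom}_R(P_0\otimes_R N,M)\to\mathrm{Hom}_R(K'\otimes_R N,M)$; the long exact $\mathrm{Ext}$-sequence of $0\to K'\otimes_R N\to P_0\otimes_R N\to K\otimes_R N\to 0$ then exhibits this cokernel as a submodule of $\mathrm{Ext}^1_R(K\otimes_R N,M)=0$. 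Hence $\mathrm{Ext}^1_R(K,\mathrm{Hom}_R(N,M))=0$, and $\mathrm{Hom}_R(N,M)$ is Harmanci injective.

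The step I expect to be the main obstacle is precisely this last transfer. The Hom-tensor adjunction supplies the natural isomorphism only at the level of $\mathrm{Hom}$, and because $-\otimes_R N$ carries a projective resolution of $K$ only to a flat (not projective) resolution of $K\otimes_R N$, there is no automatic isomorphism $\mathrm{Ext}^1_R(K,\mathrm{Hom}_R(N,M))\cong\mathrm{Ext}^1_R(K\otimes_R N,M)$. The two-term comparison above sidesteps this difficulty by yielding only an embedding, which is all that is required once $\mathrm{Ext}^1_R(K\otimes_R N,M)$ has been shown to vanish via the closure of $\mathcal{CMI}$ under flat tensoring.
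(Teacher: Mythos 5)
Your proof is correct and follows essentially the same route as the paper's: both establish $(1)\Rightarrow(2)$ by taking a free presentation of $K$, showing that $K\otimes_R N$ stays in $\mathcal{CMI}$ when $N$ is flat, deducing $\mathrm{Ext}^1_R(K\otimes_R N,M)=0$ from Harmanci injectivity of $M$, and transferring this vanishing back across the tensor-hom adjunction, with the remaining implications handled by the same specializations (projective $\subseteq$ flat, free $\subseteq$ projective, $N=R$). The only cosmetic difference is that the paper justifies $\mathrm{Tor}^R_1(K\otimes_R N, E(_RR))=0$ by the isomorphism $\mathrm{Tor}^R_1(N\otimes_R K,E(_RR))\cong N\otimes_R\mathrm{Tor}^R_1(K,E(_RR))$ for flat $N$, whereas you derive it from Lazard's theorem and the commutation of Tor with filtered colimits.
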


\begin{proof} (1) $\Rightarrow$ (2) Let $N$ and $K$ be $R$-modules
with $N$ flat and Tor$^R_1(K, E(_RR))=0$. There exists an exact
sequence $0\rightarrow L \rightarrow F \rightarrow K \rightarrow
0$ where $F$ is free. Since $N$ is flat,  the  sequence
$0\rightarrow N\otimes_R L\rightarrow N\otimes_R F\rightarrow
N\otimes_R K\rightarrow 0$ is exact. It follows that
\begin{center}$\cdots \rightarrow$ Hom$_R(N\otimes_R F,
M)\rightarrow$ Hom$_R(N\otimes_R L,M)\rightarrow$
Ext$^1_R(N\otimes_R K,M)\rightarrow \cdots $
\end{center} is also exact. Since $N$ is flat and Tor$^R_1(K, E(_RR))=0$,
Tor$^R_1(N\otimes_R K, E(_RR))\cong N\otimes_R$
Tor$^R_1(K,E(_RR))=0$ by \cite[p. 667]{R}. This implies that
Ext$^1_R(N\otimes_R K,M)=0$ by (1). Hence via the Adjoint
Isomorphism, we have the exactness of the sequence

Hom$_R(F,$ Hom$_R(N,M))\rightarrow$ Hom$_R(L,$
Hom$_R(N,M))\rightarrow 0$.\\ Then again, according to the short
exact sequence $0\rightarrow L \rightarrow F \rightarrow
K\rightarrow 0$ and the functor Hom$_R(-,$ Hom$_R(N,M))$, we
obtain  \begin{center}{\small $\cdots \rightarrow$ Hom$_R(F,$
Hom$_R(N,M))\rightarrow$ Hom$_R(L,$ Hom$_R(N,M))\rightarrow$
Ext$^1_R(K,$ Hom$_R(N,M))\rightarrow$ Ext$^1_R(F,$
Hom$_R(N,M))=0$.}\end{center} Therefore Ext$^1_R(K,$
Hom$_R(N,M))=0$, as desired.\\
(2) $\Rightarrow$ (3) $\Rightarrow$ (4) Obvious.\\
(4) $\Rightarrow$ (1) Clear from the fact that $M\cong $
Hom$_R(R,M)$.
\end{proof}

We illustrate some consequences of Theorem \ref{Hom} as follows.

\begin{cor} Let $R$ be a commutative ring and $M$ a flat
$R$-module. If $M$ is fully invariant in a Harmanci injective
$R$-module, then End$(M)$ is also a Harmanci injective $R$-module.
\end{cor}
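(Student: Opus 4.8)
The plan is to reduce the statement to two tools already available: the Hom-module machinery of Theorem \ref{Hom} and the closure of the class of Harmanci injective modules under direct summands, Lemma \ref{basic}(3). Fix a Harmanci injective $R$-module $H$ in which $M$ sits as a fully invariant submodule, so that $t(M)\subseteq M$ for every $t\in\mathrm{End}(H)$. Since $R$ is commutative, $\mathrm{End}(M)=\mathrm{Hom}_R(M,M)$, and the inclusion $\iota\colon M\hookrightarrow H$ induces an $R$-monomorphism $\iota_*\colon \mathrm{Hom}_R(M,M)\to\mathrm{Hom}_R(M,H)$, $g\mapsto\iota g$. The strategy is to realize $\mathrm{End}(M)$ as a direct summand of a Harmanci injective Hom-module.

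First I would record that $\mathrm{Hom}_R(M,H)$ is Harmanci injective: this is precisely the implication $(1)\Rightarrow(2)$ of Theorem \ref{Hom}, applied to the Harmanci injective module $H$ and the flat module $M$. This is where flatness of $M$ is consumed, and it furnishes a concrete Harmanci injective module containing the copy $\mathrm{im}\,\iota_*$ of $\mathrm{End}(M)$; it then remains only to split this copy off.

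The heart of the argument, and the step I expect to be the main obstacle, is to show that $\iota_*$ is a split monomorphism. Here full invariance is the crucial ingredient: since $t(M)\subseteq M$ for every $t\in\mathrm{End}(H)$, each $t$ restricts to $t|_M\in\mathrm{End}(M)$, and one checks that $t\circ\iota=\iota\circ(t|_M)$, so that $\mathrm{im}\,\iota_*$ is stable under post-composition by $\mathrm{End}(H)$. The task is to upgrade this stability to an actual $R$-linear retraction $r\colon\mathrm{Hom}_R(M,H)\to\mathrm{End}(M)$ with $r\iota_*=1_{\mathrm{End}(M)}$; the natural candidate is to extract from full invariance a projection of $H$ onto $M$ fixing $M$ and to set $r$ to be post-composition with it. Verifying the existence of such a retraction -- equivalently, that $M$ splits off $H$ in the relevant sense -- is the delicate point, and is exactly where the interplay between the flatness of $M$, the full invariance, and the Harmanci injectivity of $H$ must be exploited.

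Once the splitting is in hand the conclusion is immediate: $\mathrm{End}(M)$ is a direct summand of the Harmanci injective module $\mathrm{Hom}_R(M,H)$, hence is itself Harmanci injective by Lemma \ref{basic}(3). I would in fact first attempt the slightly cleaner variant in which the splitting is arranged already at the level of $\iota\colon M\hookrightarrow H$, making $M$ a direct summand of $H$: then $M$ is Harmanci injective by Lemma \ref{basic}(3), and $\mathrm{End}(M)=\mathrm{Hom}_R(M,M)$ is Harmanci injective by Theorem \ref{Hom} because $M$ is flat. Either route isolates the same obstacle, namely producing the retraction from the fully invariant containment.
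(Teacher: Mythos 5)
Your opening step is correct, and it is the step the paper's placement of this corollary suggests: by Theorem \ref{Hom}, (1)$\Rightarrow$(2), applied to the flat module $M$ and the Harmanci injective module $H$, the module $\mathrm{Hom}_R(M,H)$ is Harmanci injective. After that, however, your argument stops being a proof: the retraction $r\colon \mathrm{Hom}_R(M,H)\to \mathrm{End}(M)$ with $r\iota_*=1$ is never constructed --- you yourself flag it as ``the delicate point'' and leave it open. Worse, the route you propose for producing it, namely extracting from full invariance a projection of $H$ onto $M$ fixing $M$ (equivalently, realizing $M$ as a direct summand of $H$), is not a viable strategy. Full invariance is a constraint on the endomorphisms of $H$ that already exist (each must carry $M$ into $M$); it does not manufacture any endomorphism of $H$ with image $M$, let alone an idempotent one. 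The standard examples of fully invariant submodules --- the socle, the radical, the torsion submodule, the singular submodule --- are precisely submodules that are typically \emph{not} direct summands, so no implication ``fully invariant $\Rightarrow$ splits off'' is available, and nothing in your outline shows how flatness of $M$ or Harmanci injectivity of $H$ could substitute for it.

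Full invariance is meant to be used in the opposite direction, and it yields an equality rather than a splitting. Given $f\in \mathrm{Hom}_R(M,H)$, extend it along the inclusion $M\hookrightarrow H$ to some $\tilde f\in \mathrm{End}(H)$; full invariance applied to $\tilde f$ then gives $f(M)=\tilde f(M)\subseteq M$. Hence every homomorphism $M\to H$ already has image in $M$, i.e.\ your $\iota_*$ is surjective and $\mathrm{End}(M)=\mathrm{Hom}_R(M,H)$ on the nose, so Theorem \ref{Hom} finishes the proof with no appeal to Lemma \ref{basic}(3) at all. (This is the classical Johnson--Wong-type argument by which fully invariant submodules of quasi-injective modules inherit quasi-injectivity.) Note that the extension step is exactly where an injectivity-type property of $H$ must be invoked: one needs the restriction map $\mathrm{End}(H)\to \mathrm{Hom}_R(M,H)$ to be onto, which holds when $\mathrm{Ext}^1_R(H/M,H)=0$; for a Harmanci injective $H$ this follows, via Proposition \ref{exact}, once $H/M\in\mathcal{CMI}$. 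So the genuine content of the corollary lies in justifying this extension property, a point your plan never engages; the splitting you hope to extract from full invariance cannot be obtained from it.
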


\begin{cor}The following are
equivalent for a ring $R$.\begin{enumerate}
    \item[(1)] $R$ is Harmanci injective as a right $R$-module.
    \item[(2)] Hom$_R(N,R)$ is a Harmanci injective right $R$-module for every $R$-$R$ bimodule  $N$ with $N$ flat as a left $R$-module.
    \item[(3)] Hom$_R(N,R)$ is a Harmanci injective right $R$-module for every $R$-$R$ bimodule  $N$ with $N$ projective as a left $R$-module.
    \item[(4)] Hom$_R(N,R)$ is a Harmanci injective right $R$-module for every $R$-$R$ bimodule  $N$ with $N$ free as a left $R$-module.
    \item[(5)] Every direct product of the left $R$-module $R$ is
    Harmanci injective as a right $R$-module.
\end{enumerate}
\end{cor}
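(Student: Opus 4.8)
The plan is to run the cycle $(1)\Rightarrow(2)\Rightarrow(3)\Rightarrow(4)\Rightarrow(1)$ and, separately, the equivalence $(1)\Leftrightarrow(5)$; in effect conditions $(1)$--$(4)$ are the specialization $M=R_R$ of Theorem \ref{Hom}, the only twist being that for a general ring one must put an $R$-$R$-bimodule structure on $N$ so that Hom$_R(N,R)$ becomes a right $R$-module. I would first clear away the formal implications. For $(1)\Leftrightarrow(5)$: viewing $\prod_I({}_RR)$ as a right $R$-module is the same as forming the product $\prod_I(R_R)$ of copies of the regular right module, so if $R_R$ is Harmanci injective then the product is Harmanci injective by Lemma \ref{basic}(1), while taking $I$ to be a one-point set recovers $R_R$ from the product. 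For the cycle, $(2)\Rightarrow(3)\Rightarrow(4)$ are immediate from the inclusions $\{\text{free}\}\subseteq\{\text{projective}\}\subseteq\{\text{flat}\}$ of left $R$-modules, since a statement quantified over all left flat $N$ specializes to all projective and then to all free bimodules $N$. Finally $(4)\Rightarrow(1)$ follows by taking $N$ to be the regular bimodule ${}_RR_R$, which is free as a left module: here Hom$_R(N,R)\cong R$ as a right $R$-module, so $(4)$ forces $R_R$ to be Harmanci injective.

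The substantive implication is $(1)\Rightarrow(2)$, which I would prove by transcribing the argument of Theorem \ref{Hom} with $M$ taken to be $R_R$. Let $N$ be an $R$-$R$-bimodule that is flat as a left $R$-module and let $K$ be a right $R$-module with Tor$^R_1(K,E(_RR))=0$, so that $K\in\mathcal{CMI}$. Choose a free presentation $0\rightarrow L\rightarrow F\rightarrow K\rightarrow 0$; since $N$ is left flat, the functor $-\otimes_R N$ is exact and returns a short exact sequence $0\rightarrow L\otimes_R N\rightarrow F\otimes_R N\rightarrow K\otimes_R N\rightarrow 0$ of right $R$-modules. Applying Hom$_R(-,R)$ to this sequence and using the vanishing Tor$^R_1(K\otimes_R N,E(_RR))=0$ (discussed below) together with $(1)$ to kill Ext$^1_R(K\otimes_R N,R)$, I obtain surjectivity of Hom$_R(F\otimes_R N,R)\rightarrow$ Hom$_R(L\otimes_R N,R)$. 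The tensor--hom adjunction Hom$_R(X\otimes_R N,R)\cong$ Hom$_R(X,H)$, with $H=$ Hom$_R(N,R)$, rewrites this as surjectivity of Hom$_R(F,H)\rightarrow$ Hom$_R(L,H)$. Feeding this into the long exact sequence of Hom$_R(-,H)$ attached to $0\rightarrow L\rightarrow F\rightarrow K\rightarrow 0$, and using Ext$^1_R(F,H)=0$ since $F$ is free, squeezes Ext$^1_R(K,H)=0$, which is precisely Harmanci injectivity of $H$.

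The single step that carries real content, and which I expect to be the main obstacle, is the vanishing Tor$^R_1(K\otimes_R N,E(_RR))=0$: this is exactly what licenses invoking $(1)$ above. In the commutative situation of Theorem \ref{Hom} this is the transparent isomorphism Tor$^R_1(N\otimes_R K,E(_RR))\cong N\otimes_R$ Tor$^R_1(K,E(_RR))=0$, valid because tensoring with the flat module $N$ commutes with Tor. In the bimodule setting the same conclusion must be reached with care about which side the flatness acts on, and this is where I expect the difficulty to concentrate. The route I would pursue is the character-module reformulation of Lemma \ref{equiv}: writing $(-)^+=$ Hom$_{\Bbb Z}(-,\Bbb Q/\Bbb Z)$ for the character module, the vanishing Tor$^R_1(K\otimes_R N,E(_RR))=0$ is equivalent to $(K\otimes_R N)^+\cong$ Hom$_R(N,K^+)$ being Matlis injective, while $K^+$ is already Matlis injective because $K\in\mathcal{CMI}$. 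The problem thus reduces to showing that Hom$_R(N,-)$ preserves Matlis injectivity of $K^+$ when $N$ is flat. Verifying this persistence, keeping the left and right bimodule actions straight throughout, is the delicate heart of the argument; once it is in hand, everything else is a faithful copy of the proof of Theorem \ref{Hom}.
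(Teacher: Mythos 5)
Your handling of the peripheral implications is fine: $(1)\Leftrightarrow(5)$ via Lemma \ref{basic}(1), $(2)\Rightarrow(3)\Rightarrow(4)$ by specialization, and $(4)\Rightarrow(1)$ from $\mathrm{Hom}_R({}_RR_R,R)\cong R_R$; this matches what the paper leaves implicit (it states the corollary with no proof, as a companion to Theorem \ref{Hom}, which is proved only for commutative rings). The problem is that $(1)\Rightarrow(2)$, the only implication with content, is not actually proved in your proposal. You transcribe the proof of Theorem \ref{Hom} and correctly isolate the one step that does not transcribe to the bimodule setting, namely the vanishing $\mathrm{Tor}^R_1(K\otimes_R N,E({}_RR))=0$ for $K\in\mathcal{CMI}$ (equivalently, via Lemma \ref{equiv} and adjunction, that $\mathrm{Hom}_R(N,-)$ carries the Matlis injective module $K^+$ to a Matlis injective module when ${}_RN$ is flat), but you then leave this step as ``the delicate heart of the argument,'' verified nowhere. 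A proof whose decisive step is an unverified expectation is a proof with a gap.

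Moreover this gap cannot be closed, because the deferred claim is false. Let $R$ be an algebra over a field $k$ with $E({}_RR)$ not flat; for instance the commutative Artinian local ring $R=k[x,y]/(x,y)^2$, which is not self-injective, so $E(R)\cong E(k)^2$ is not flat. (Do not try to instantiate this with the paper's Example \ref{inj hull-ex}: the columns of $M_n(D)$ are isomorphic to the projective left module $U_n(D)e_{nn}$, so $M_n(D)$ is in fact projective over $U_n(D)$; the ``modularity'' step in that example is erroneous.) Choose $M$ with $\mathrm{Tor}^R_1(M,E({}_RR))\neq 0$ and set $N=R\otimes_k M$, an $R$-$R$-bimodule via $r(x\otimes m)s=rx\otimes ms$. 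Then ${}_RN$ is free (a $k$-basis of $M$ gives a left basis), yet for $K=R\in\mathcal{CMI}$ one has $K\otimes_R N\cong N_R\cong M^{(\dim_k R)}$ as right $R$-modules, hence $\mathrm{Tor}^R_1(K\otimes_R N,E({}_RR))\cong \mathrm{Tor}^R_1(M,E({}_RR))^{(\dim_k R)}\neq 0$; equivalently, $\mathrm{Hom}_R(N,K^+)\cong (N_R)^+$ is not Matlis injective even though $K^+=R^+$ is injective and ${}_RN$ is free. Flatness of $N$ on the left simply does not control the right $R$-structure that $\mathrm{Tor}^R_1(-,E({}_RR))$ sees: over a $k$-algebra, your claimed vanishing for all such $N$ and $K$ forces $E({}_RR)$ to be flat, and in that case Harmanci injectivity coincides with injectivity by Theorem \ref{flat}, so the corollary reduces to the standard fact that $\mathrm{Hom}_R(N,-)$, being right adjoint to the exact functor $-\otimes_R N$, preserves injectives. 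Consequently any correct proof of $(1)\Rightarrow(2)$ in the bimodule setting must use hypothesis $(1)$ in establishing $\mathrm{Ext}^1_R(K\otimes_R N,R)=0$ itself, rather than trying to prove the Tor-vanishing unconditionally as you propose; the commutative argument of Theorem \ref{Hom}, which rests on Rotman's isomorphism $\mathrm{Tor}^R_1(N\otimes_R K,E)\cong N\otimes_R\mathrm{Tor}^R_1(K,E)$, offers no such substitute.
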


\section{The pair $(\mathcal{CMI}, \mathcal{HI})$}

In this section, we are particularly interested in some properties
of the pair $(\mathcal{CMI}, \mathcal{HI})$ and approximations of
modules in terms of Harmanci injectivity.

\begin{thm}\label{pair} The pair $(\mathcal{CMI}, \mathcal{HI})$ is a cotorsion theory.
\end{thm}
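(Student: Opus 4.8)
To show $(\mathcal{CMI}, \mathcal{HI})$ is a cotorsion theory, I must establish the two defining orthogonality relations: $\mathcal{HI} = \mathcal{CMI}^\perp$ and $\mathcal{CMI} = {}^\perp\mathcal{HI}$. The first is essentially immediate from the definitions. Recall from Lemma \ref{equiv} that $\mathcal{CMI} = \{N_R \mid \mathrm{Tor}^R_1(N, E(_RR)) = 0\}$, so a module $M$ lies in $\mathcal{CMI}^\perp$ exactly when $\mathrm{Ext}^1_R(N,M)=0$ for every $N$ with $\mathrm{Tor}^R_1(N,E(_RR))=0$, which is verbatim the definition of $M$ being Harmanci injective. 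Hence $\mathcal{HI} = \mathcal{CMI}^\perp$ holds by unwinding definitions, with no real work.

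The substantive half is $\mathcal{CMI} = {}^\perp\mathcal{HI}$. The inclusion $\mathcal{CMI} \subseteq {}^\perp\mathcal{HI}$ again follows directly: if $N \in \mathcal{CMI}$, then $\mathrm{Tor}^R_1(N,E(_RR))=0$, so $\mathrm{Ext}^1_R(N,M)=0$ for every Harmanci injective $M$ by definition, giving $N \in {}^\perp\mathcal{HI}$. The reverse inclusion ${}^\perp\mathcal{HI} \subseteq \mathcal{CMI}$ is where the real content lies, and this is the step I expect to be the main obstacle. The plan is to argue by contrapositive: given a module $N \notin \mathcal{CMI}$, that is, with $\mathrm{Tor}^R_1(N,E(_RR)) \neq 0$, I must produce a \emph{single} Harmanci injective module $M$ witnessing $\mathrm{Ext}^1_R(N,M) \neq 0$, thereby showing $N \notin {}^\perp\mathcal{HI}$.

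For this witness I would take $M = \mathrm{Hom}_{\mathbb Z}(E(_RR), \mathbb Q/\mathbb Z)$, the character module of $E(_RR)$, which is Harmanci injective by Proposition \ref{inj hull}. The key is the adjunction isomorphism of \cite[Theorem 3.2.1]{Enochs-Jenda} already used in Lemma \ref{equiv}, namely
\[
\mathrm{Ext}^1_R\big(N, \mathrm{Hom}_{\mathbb Z}(E(_RR), \mathbb Q/\mathbb Z)\big) \cong \mathrm{Hom}_{\mathbb Z}\big(\mathrm{Tor}^R_1(N, E(_RR)), \mathbb Q/\mathbb Z\big).
\]
Since $\mathbb Q/\mathbb Z$ is an injective cogenerator of abelian groups, the right-hand side vanishes if and only if $\mathrm{Tor}^R_1(N,E(_RR))=0$. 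Therefore $\mathrm{Tor}^R_1(N,E(_RR)) \neq 0$ forces $\mathrm{Ext}^1_R(N,M) \neq 0$ for this particular Harmanci injective $M$, which is exactly what is needed. This completes ${}^\perp\mathcal{HI} \subseteq \mathcal{CMI}$, and combining all four inclusions gives that $(\mathcal{CMI}, \mathcal{HI})$ is a cotorsion theory. The only delicate point is the faithfulness of the character functor (that $\mathrm{Hom}_{\mathbb Z}(-, \mathbb Q/\mathbb Z)$ detects nonzero abelian groups), which is standard and secures the contrapositive argument.
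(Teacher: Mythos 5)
Your proof is correct and follows essentially the same route as the paper: both reduce everything to the single witness module $\mathrm{Hom}_{\mathbb Z}(E(_RR),\mathbb Q/\mathbb Z)$ (Harmanci injective by Proposition \ref{inj hull}) together with the isomorphism $\mathrm{Ext}^1_R(N,\mathrm{Hom}_{\mathbb Z}(E(_RR),\mathbb Q/\mathbb Z))\cong \mathrm{Hom}_{\mathbb Z}(\mathrm{Tor}^R_1(N,E(_RR)),\mathbb Q/\mathbb Z)$ and the cogenerator property of $\mathbb Q/\mathbb Z$. Your phrasing of the inclusion ${}^\perp\mathcal{HI}\subseteq \mathcal{CMI}$ by contrapositive is merely a logical repackaging of the paper's direct argument, not a different proof.
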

\begin{proof} Obviously, $\mathcal{HI}=\mathcal{CMI}^\bot$. We need
to show that $\mathcal{CMI}=$ $^\bot\mathcal{HI}$. By the
definition, it is clear that $\mathcal{CMI}\subseteq$
$^\bot\mathcal{HI}$. For the reverse inclusion, let $N\in$
$^\bot\mathcal{HI}$. By Proposition \ref{inj hull}, Ext$^1_R(N,$
Hom$_{\Bbb Z}(E(_RR), \Bbb Q/\Bbb Z))=0$. So being Ext$^1_R(N,$
Hom$_{\Bbb Z}(E(_RR), \Bbb Q/\Bbb Z))\cong $ Hom$_{\Bbb
Z}($Tor$^R_1(N, E(_RR)), \Bbb Q/\Bbb Z)$ implies Tor$^R_1(N,
E(_RR))=0$, that is Hom$_{\Bbb Z}(N,\Bbb Q/\Bbb Z)$ is Matlis
injective by Lemma \ref{equiv}. Thus $N\in \mathcal{CMI}$.
Therefore $^\bot\mathcal{HI}\subseteq \mathcal{CMI}$. This
completes the proof.
\end{proof}

\begin{rem} In the light of Theorem \ref{pair}, we have the following.
\begin{enumerate}
    \item The following are equivalent.
\begin{enumerate}
    \item[(i)] Every cotorsion right $R$-module is Harmanci injective.
    \item[(ii)] Every module which belongs to $\mathcal{CMI}$  is
flat.
\end{enumerate}
    \item  The following are equivalent.
\begin{enumerate}
    \item[(i)] Every right $R$-module is Harmanci injective.
    \item[(ii)] Every module which belongs to $\mathcal{CMI}$  is
projective.
\end{enumerate}
    \item If every right $R$-module is Harmanci injective, then
$R$ is right perfect.
\end{enumerate}
\end{rem}

Let $(\mathcal {F}, \mathcal T)$ be a cotorsion pair. It is called
{\it perfect} if $\mathcal F$ is a covering class and $\mathcal T$
is an enveloping class. Also, $(\mathcal {F}, \mathcal T)$ is {\it
complete} if each module has a special $\mathcal {F}$-precover,
equivalently, each module has a special $\mathcal {T}$-preenvelope
(see \cite[Lemma 1.17]{Tr}). Let $\mathcal C \subseteq$ Mod-$R$
and $M\in$ Mod-$R$ and $f \colon M\rightarrow C$ be a $\mathcal
C$-preenvelope of $M$. It is called {\it special} if $f$ is
injective and Ext$^1_R(\mbox{Coker} f, T) = 0$ for all $T\in
\mathcal C$. Let $f \colon C\rightarrow M$ be a $\mathcal
C$-precover. It is called {\it special} if $f$ is surjective and
Ext$^1_R(F, \mbox{Ker} f) = 0$ for all $F\in \mathcal C$. Given
the above concepts, we now address the following question: What
can be said about such properties for the cotorsion pair
$(\mathcal{CMI}, \mathcal{HI})$?

\begin{thm}\label{properties} The following hold.
\begin{enumerate}
\item[(1)]  The cotorsion theory $(\mathcal{CMI}, \mathcal{HI})$ is
complete.
\item[(2)]  $\mathcal{CMI}$ is a special covering class.
\item[(3)] $\mathcal{HI}$ is a special enveloping class.
\item[(4)] The cotorsion theory $(\mathcal{CMI}, \mathcal{HI})$ is
perfect.
\end{enumerate}
\end{thm}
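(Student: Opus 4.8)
The plan is to anchor everything on statement (1), completeness, and then to read off (2)--(4) from the standard machinery of cotorsion theories together with the facts already recorded for $\mathcal{CMI}$. Recall from the proof of Lemma \ref{C-class} that $\mathcal{CMI}=\{N_R\mid \mathrm{Tor}^R_1(N,E(_RR))=0\}$, and that by Lemma \ref{C-class}(5) the class $\mathcal{CMI}$ is already covering. Granting completeness, each right $R$-module then has a special $\mathcal{CMI}$-precover and, dually, a special $\mathcal{HI}$-preenvelope; combined with Lemma \ref{C-class}(5) this immediately delivers (2), that $\mathcal{CMI}$ is a special covering class. So the real content is (1), together with the passage from (1) to the enveloping statements (3) and (4).

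For (1) I would exhibit a \emph{set} $\mathcal{S}$ of modules with $\mathcal{S}^\perp=\mathcal{HI}$ and invoke the Eklof--Trlifaj theorem, that a cotorsion theory generated by a set is complete. Fix the infinite cardinal $\kappa=|R|+|E(_RR)|+\aleph_0$ and let $\mathcal{S}$ be a set of representatives, up to isomorphism, of the modules in $\mathcal{CMI}$ of cardinality at most $\kappa$. The inclusion $\mathcal{HI}=\mathcal{CMI}^\perp\subseteq\mathcal{S}^\perp$ is trivial since $\mathcal{S}\subseteq\mathcal{CMI}$. For the reverse inclusion I would show that every $N\in\mathcal{CMI}$ admits a continuous filtration $0=N_0\subseteq N_1\subseteq\cdots\subseteq N_\lambda=N$ whose consecutive quotients $N_{\alpha+1}/N_\alpha$ lie in $\mathcal{S}$; once this is in hand, Eklof's lemma shows that the condition $\mathrm{Ext}^1_R(-,M)=0$ is inherited along the filtration, so $M\in\mathcal{S}^\perp$ forces $M\in\mathcal{CMI}^\perp=\mathcal{HI}$. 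Hence $\mathcal{S}^\perp=\mathcal{HI}$ and the pair is complete.

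The filtration step is where the one genuine difficulty sits, and it is a Hill/Kaplansky-type argument. The point to exploit is that the defining condition $\mathrm{Tor}^R_1(-,E(_RR))=0$ is detected on small pieces: $\mathrm{Tor}^R_1$ commutes with direct limits, and any element of a $\mathrm{Tor}$-group is witnessed by finitely many elements and relations, so starting from a $\leq\kappa$-generated submodule one can enlarge it, in at most $\kappa$ steps, to a $\leq\kappa$-sized submodule that again lies in $\mathcal{CMI}$ and for which the quotient stays in $\mathcal{CMI}$. Iterating this transfinitely produces the desired $\mathcal{S}$-filtration, that is, it shows $\mathcal{CMI}$ is deconstructible. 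I expect this deconstructibility of $\mathcal{CMI}$ to be the main obstacle; everything before and after it is formal, and the Tor-description of $\mathcal{CMI}$ coming from Lemma \ref{equiv} is exactly what makes the argument go through.

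Finally, (3) and (4) follow from (1) once one observes that $\mathcal{CMI}$ is closed under direct limits: from $\mathrm{Tor}^R_1(\varinjlim N_i,E(_RR))\cong\varinjlim \mathrm{Tor}^R_1(N_i,E(_RR))$ one sees that a direct limit of modules in $\mathcal{CMI}$ is again in $\mathcal{CMI}$. A complete cotorsion theory whose left-hand class is closed under direct limits is perfect, so $(\mathcal{CMI},\mathcal{HI})$ is perfect, which is (4); in particular $\mathcal{HI}$ is an enveloping class. Combining this with the special $\mathcal{HI}$-preenvelopes already produced by completeness yields (3), that $\mathcal{HI}$ is a special enveloping class.
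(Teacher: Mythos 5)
Your proposal is correct in outline but takes a genuinely different route from the paper. The paper does not use sets of generators or filtrations at all: for (1) it combines Lemma \ref{C-class}(1) and (5) with two cited lemmas from Trlifaj's notes \cite{T}, the point being that since $\mathcal{CMI}$ is a covering class closed under extensions, a Wakamatsu-type argument makes every $\mathcal{CMI}$-cover special, and a Salce-type argument converts the resulting special precovers into completeness of the pair. For (2) it argues directly that covers are epic (projectives lie in $\mathcal{CMI}$) with kernels in $\mathcal{HI}$ (Lemma \ref{C-class}(6)); for (3) it uses closure of $\mathcal{CMI}$ under direct limits together with \cite[Corollary 1.19]{T} to get envelopes, then Wakamatsu's Lemma \cite[Lemma 2.1.2]{X} and Lemma \ref{basic}(4) for speciality; (4) is then formal. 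You instead prove completeness from scratch, by showing $\mathcal{CMI}$ is deconstructible and invoking Eklof's Lemma and the Eklof--Trlifaj theorem, and then recover (2)--(4) from the Enochs-type theorem that a complete cotorsion pair whose left class is closed under direct limits is perfect; your step ``cover $+$ special precover $\Rightarrow$ the cover is special'' in (2) is a true standard fact (alternatively Wakamatsu's Lemma applies directly, as in the paper). Your route is essentially the textbook proof that every Tor-pair, i.e.\ a pair generated by a class of the form $\{N \mid \mathrm{Tor}^R_1(N,B)=0\}$, is perfect (G\"{o}bel--Trlifaj); it is heavier but more self-contained, whereas the paper's argument is short because the hard set-theoretic content is outsourced to the Holm--J{\o}rgensen covering theorem \cite[Theorem 2.5]{HJ} already used in Lemma \ref{C-class}(5).

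One step of your sketch needs repair, namely the enlargement procedure inside the deconstructibility argument. Killing the elements of $\mathrm{Tor}^R_1(N_1, E(_RR))$ by enlarging a small submodule $N_1$ of $N\in\mathcal{CMI}$ only ensures that the \emph{submodule} lands in $\mathcal{CMI}$; it gives no control over the quotient $N/N_1$, which you also need in $\mathcal{CMI}$ to continue the induction, nor over the layers $N_{\alpha+1}/N_\alpha$ of the filtration. The fix is to close $N_1$ against a second kind of obstruction as well: the elements of the kernel of $N_1\otimes_R E(_RR)\rightarrow N\otimes_R E(_RR)$. Both kinds of obstruction are witnessed by finitely many elements and die in some submodule of size at most $\kappa$ (this is why $|E(_RR)|$ must be absorbed into $\kappa$, as you did), so after $\omega$ rounds of closure one obtains a $\leq\kappa$-sized submodule that is $E(_RR)$-pure in $N$ in the paper's terminology and lies in $\mathcal{CMI}$. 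Then $\mathrm{Tor}^R_1(N/N_\alpha, E(_RR))=0$ follows from the long exact sequence exactly as in the proof of Lemma \ref{C-class}(4), the layers inherit the same property, and limit stages work because Tor and tensor commute with direct limits. Equivalently, one may quote closure of $\mathcal{CMI}$ under pure submodules (provable via character modules, though not recorded in the paper) together with the standard fact that every module admits a filtration by small pure submodules. With that adjustment your argument is complete.
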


\begin{proof} (1) Follows from \cite[Lemma 1.9(2) and Lemma 1.13]{T} and Lemma \ref{C-class}(1) and (5).\\
(2) By Lemma \ref{C-class}(5) each module has a
$\mathcal{CMI}$-cover. Since the class of all projective modules
is contained in $\mathcal{CMI}$, every $\mathcal{CMI}$-cover is an
epimorphism. On the other hand, for a  $\mathcal{CMI}$-cover $f$,
Lemma \ref{C-class}(6) implies that Ext$^1_R(F, \mbox{Ker} f) = 0$
for all $F\in \mathcal {CMI}$. Thus every $\mathcal{CMI}$-cover is
special.\\
(3) Since Tor commutes with direct limits (see \cite[Proposition
7.8]{R}), the class $\mathcal{CMI}$ is closed under direct limits.
Hence (1) and \cite[Corollary 1.19]{T} imply that $\mathcal{HI}$
is an enveloping class. As the class of all injective modules is
contained in $\mathcal{HI}$, every $\mathcal{HI}$-envelope is a
monomorphism. For a $\mathcal{HI}$-envelope $f$, \cite[Lemma
2.1.2]{X} and Lemma \ref{basic}(4) yield that
Ext$^1_R(\mbox{Coker} f, X) = 0$ for all $X\in \mathcal {HI}$.
Hence every $\mathcal{HI}$-envelope is special.\\
(4) Since $\mathcal{CMI}$ is a covering class and  $\mathcal{HI}$
is an enveloping class, $(\mathcal{CMI}, \mathcal{HI})$ is
perfect.
\end{proof}

 Let $M$ be a module and $HIE(M)$ denote the Harmanci injective envelope of $M$.
By Theorem \ref{properties}(3), $HIE(M)/M$ belongs to
$\mathcal{CMI}$. We now investigate the case for injective
envelopes for Harmanci injective modules.

\begin{prop}\label{ess} Let $M$ be a Harmanci injective module and $N$ is an
essential extension of $M$. Then $N/M$ belongs to $\mathcal{CMI}$
if and only if $M=N$.
\end{prop}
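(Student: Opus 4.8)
The plan is to prove the equivalence directly, with the nontrivial content concentrated in a splitting argument. The forward implication is immediate: if $M = N$, then $N/M = 0$, and since $\text{Tor}^R_1(0, E(_RR)) = 0$, the zero module lies in $\mathcal{CMI}$ by the description of that class recorded at the start of the proof of Lemma \ref{C-class}.

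For the converse, I would start from the hypothesis $N/M \in \mathcal{CMI}$ and exploit the cotorsion theory $(\mathcal{CMI}, \mathcal{HI})$ established in Theorem \ref{pair}. Because that pair is a cotorsion theory, $\mathcal{HI} = \mathcal{CMI}^\perp$, so any module in $\mathcal{HI}$ is right-orthogonal to every module in $\mathcal{CMI}$. Since $M$ is Harmanci injective, i.e.\ $M \in \mathcal{HI}$, and $N/M \in \mathcal{CMI}$ by assumption, this pairing gives $\text{Ext}^1_R(N/M, M) = 0$.

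The key step is then to feed this vanishing into the canonical short exact sequence $0 \rightarrow M \rightarrow N \rightarrow N/M \rightarrow 0$ associated to the inclusion $M \subseteq N$. As $\text{Ext}^1_R(N/M, M) = 0$, this sequence splits, so there is a submodule $K$ of $N$ with $N = M \oplus K$ and $K \cong N/M$. Finally, essentiality closes the argument: since $K$ is a direct summand complementary to $M$ we have $K \cap M = 0$, and because $M$ is essential in $N$ this forces $K = 0$, whence $M = N$.

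I do not expect a genuine obstacle here; the proof is short once the cotorsion pairing of Theorem \ref{pair} is invoked. The only point requiring care is the logical shape of the statement—recognizing that the membership $N/M \in \mathcal{CMI}$ is precisely what makes the defining extension of $N$ by $M$ split, after which the essential-extension hypothesis does all the remaining work. One could alternatively phrase the splitting without explicitly naming the cotorsion theory, by observing directly that $M \in \mathcal{HI}$ and the definition of Harmanci injectivity via $\mathcal{CMI} = \{\,N_R \mid \text{Tor}^R_1(N, E(_RR)) = 0\,\}$ give $\text{Ext}^1_R(N/M, M) = 0$, but appealing to Theorem \ref{pair} is the cleanest route.
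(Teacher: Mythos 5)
Your proposal is correct and follows essentially the same route as the paper: both arguments reduce the hypothesis $N/M \in \mathcal{CMI}$ together with Harmanci injectivity of $M$ to the vanishing $\mathrm{Ext}^1_R(N/M, M) = 0$ (your appeal to Theorem \ref{pair} is, as you note, just the definition $\mathcal{HI} = \mathcal{CMI}^\perp$), conclude that the sequence $0 \rightarrow M \rightarrow N \rightarrow N/M \rightarrow 0$ splits, and then let essentiality force $M = N$. The only cosmetic difference is that the paper extracts the splitting explicitly from the exactness of $\mathrm{Hom}_R(N,M) \rightarrow \mathrm{Hom}_R(M,M) \rightarrow \mathrm{Ext}^1_R(N/M,M) = 0$, while you invoke the standard fact that $\mathrm{Ext}^1$-vanishing splits the extension.
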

\begin{proof} Let $N/M\in \mathcal{CMI}$ and consider the exact sequence $0\rightarrow M \rightarrow N \rightarrow N/M \rightarrow
0$. If we apply the functor Hom$_R(-,M)$ to the sequence, then we
obtain the exactness of Hom$_R(N,M)\rightarrow $
Hom$_R(M,M)\rightarrow $ Ext$^1_R(N/M, M)=0$ by the Harmanci
injectivity of $M$. It follows that $M$ is a direct summand of
$N$. So the essentiality of $M$ in $N$ yields $M=N$. The converse
is evident.
\end{proof}

\begin{thm}\label{inj ring} A module $M$ is injective if and only if $M$ is
Harmanci injective and $E(M)/M\in \mathcal{CMI}$.
\end{thm}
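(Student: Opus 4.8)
The plan is to treat the two implications separately, with the reverse direction reducing almost immediately to Proposition \ref{ess}. For the necessity, suppose $M$ is injective. Then $M$ is Harmanci injective, since every injective module is Harmanci injective, as noted just before Theorem \ref{flat}. Moreover, an injective module coincides with its own injective envelope, so $E(M) = M$ and hence $E(M)/M = 0$. The zero module trivially lies in $\mathcal{CMI}$ because $\mbox{Tor}^R_1(0, E(_RR)) = 0$, so $E(M)/M \in \mathcal{CMI}$, establishing this direction.

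For the sufficiency, the key observation is that the injective envelope $E(M)$ is by definition an essential extension of $M$. Thus, assuming $M$ is Harmanci injective and $E(M)/M \in \mathcal{CMI}$, I would apply Proposition \ref{ess} with $N = E(M)$. Since $N/M = E(M)/M \in \mathcal{CMI}$, that proposition forces $M = E(M)$, whence $M$ is injective. This is the whole content of the argument; the essentiality of $M$ in $E(M)$ is exactly the hypothesis Proposition \ref{ess} needs.

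If one prefers to argue directly rather than to invoke Proposition \ref{ess}, the same conclusion comes from the short exact sequence $0 \rightarrow M \rightarrow E(M) \rightarrow E(M)/M \rightarrow 0$. Here $E(M)/M \in \mathcal{CMI}$ means precisely $\mbox{Tor}^R_1(E(M)/M, E(_RR)) = 0$, as recorded in the opening remark of the proof of Lemma \ref{C-class}; feeding this into the definition of Harmanci injectivity of $M$ yields $\mbox{Ext}^1_R(E(M)/M, M) = 0$. Applying $\mbox{Hom}_R(-, M)$ to the sequence then shows the map $\mbox{Hom}_R(E(M), M) \rightarrow \mbox{Hom}_R(M, M)$ is surjective, so the identity $1_M$ lifts to a retraction $E(M) \rightarrow M$, making $M$ a direct summand of its essential extension $E(M)$ and therefore equal to it.

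I do not expect any genuine obstacle here: the statement is essentially a repackaging of Proposition \ref{ess} in the special case where the essential extension is taken to be the injective envelope. The only point worth stating explicitly is the equivalence between ``$E(M)/M \in \mathcal{CMI}$'' and the Tor-vanishing that drives the definition of Harmanci injectivity, which is already available from Lemma \ref{equiv}.
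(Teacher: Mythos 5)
Your proposal is correct and follows exactly the paper's route: the necessity is immediate, and the sufficiency is obtained by applying Proposition \ref{ess} to the essential extension $E(M)$ of $M$. Your optional direct argument merely unfolds the proof of Proposition \ref{ess} in this special case, so it is the same reasoning in expanded form.
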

\begin{proof} The necessity is obvious. For the sufficiency, we have
$M=E(M)$ by Proposition \ref{ess}. This completes the proof.
\end{proof}

We now apply Theorem \ref{inj ring} to rings.

\begin{cor} The following are equivalent for a ring $R$.
\begin{enumerate}
    \item[(1)] $R$ is right self-injective.
    \item[(2)] $R$ is Harmanci injective as a right $R$-module, $E(R_R)$ is flat and $R$ is a pure submodule of $E(R_R)$.
    \item[(3)] $R$ is a Harmanci injective right $R$-module and  $E(R_R)/R\in \mathcal{CMI}$.
\end{enumerate}
\end{cor}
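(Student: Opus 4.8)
The plan is to derive the whole corollary from Theorem \ref{inj ring} applied to the module $M = R_R$, supplemented by one short homological computation that brings in the extra clauses of condition (2). First I would record the equivalence (1) $\Leftrightarrow$ (3) at essentially no cost: to say that $R$ is right self-injective is exactly to say that $R_R$ is an injective module, and Theorem \ref{inj ring} with $M = R_R$ asserts that $R_R$ is injective if and only if $R_R$ is Harmanci injective and $E(R_R)/R \in \mathcal{CMI}$. Since that is precisely condition (3), the implications (1) $\Leftrightarrow$ (3) follow directly. It then remains to weave condition (2) into the cycle, for which I would prove (1) $\Rightarrow$ (2) and (2) $\Rightarrow$ (3).

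The implication (1) $\Rightarrow$ (2) is immediate. If $R_R$ is injective, then $E(R_R) = R$. Hence $R$ is Harmanci injective (injectivity implies Harmanci injectivity), $E(R_R) = R$ is flat because it is free, and $R$ is trivially a pure submodule of $E(R_R) = R$. All three clauses of (2) hold simultaneously.

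The one substantive step is (2) $\Rightarrow$ (3). Here the Harmanci injectivity of $R_R$ is one of the clauses assumed in (2), so it transfers verbatim to (3), and it remains only to show $E(R_R)/R \in \mathcal{CMI}$, that is, $\mbox{Tor}^R_1(E(R_R)/R, E(_RR)) = 0$, using the description $\mathcal{CMI} = \{N_R \mid \mbox{Tor}^R_1(N, E(_RR)) = 0\}$ from Lemma \ref{equiv}. To this end I would apply $-\otimes_R E(_RR)$ to the short exact sequence $0 \rightarrow R \rightarrow E(R_R) \rightarrow E(R_R)/R \rightarrow 0$ and read off the long exact sequence
$$0 = \mbox{Tor}^R_1(R, E(_RR)) \rightarrow \mbox{Tor}^R_1(E(R_R), E(_RR)) \rightarrow \mbox{Tor}^R_1(E(R_R)/R, E(_RR)) \overset{\partial}\rightarrow R \otimes_R E(_RR) \rightarrow E(R_R) \otimes_R E(_RR),$$
where the first term vanishes since $R$ is free. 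Flatness of $E(R_R)$ forces $\mbox{Tor}^R_1(E(R_R), E(_RR)) = 0$, so by exactness the connecting map $\partial$ is injective; purity of $R$ in $E(R_R)$ forces the map $R \otimes_R E(_RR) \rightarrow E(R_R) \otimes_R E(_RR)$ to be injective, so the image of $\partial$ lies in the kernel of an injective map and is therefore zero. An injective map with zero image has zero domain, whence $\mbox{Tor}^R_1(E(R_R)/R, E(_RR)) = 0$, giving the remaining clause of (3).

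I expect this final computation to be the only point requiring care: the two hypotheses of (2) beyond Harmanci injectivity act on the two different sides of the single connecting homomorphism $\partial$ — flatness of $E(R_R)$ kills the source $\mbox{Tor}^R_1(E(R_R), E(_RR))$ so that $\partial$ is monic, while purity of $R$ in $E(R_R)$ kills the image of $\partial$ in the target $R \otimes_R E(_RR)$ — and one must keep the roles of source and target straight to pinch $\partial$ to zero. With (1) $\Leftrightarrow$ (3) already in hand and the cycle (1) $\Rightarrow$ (2) $\Rightarrow$ (3) $\Rightarrow$ (1) now closed, all three conditions are equivalent.
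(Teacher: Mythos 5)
Your proposal is correct and follows essentially the same route as the paper: the paper proves (1) $\Rightarrow$ (2) as obvious, (3) $\Rightarrow$ (1) by Theorem \ref{inj ring}, and (2) $\Rightarrow$ (3) by the very same long exact sequence for $-\otimes_R E(_RR)$ applied to $0\rightarrow R\rightarrow E(R_R)\rightarrow E(R_R)/R\rightarrow 0$, with flatness of $E(R_R)$ and purity of $R$ in $E(R_R)$ together forcing Tor$^R_1(E(R_R)/R,E(_RR))=0$. Your write-up merely spells out more explicitly how the two hypotheses act on the source and the target of the connecting homomorphism, which the paper leaves terse.
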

\begin{proof} (1) $\Rightarrow$ (2) Obvious. (3) $\Rightarrow$ (1)
By Theorem \ref{inj ring}.\\
(2) $\Rightarrow$ (3) Applying the functor $-\otimes_R E(_RR)$ to
the exact sequence $0\rightarrow R \rightarrow E(R_R)\rightarrow
E(R_R)/R\rightarrow 0$, we acquire the long exact sequence

$\cdots \rightarrow$ Tor$^R_1(E(R_R),E(_RR))\rightarrow $
Tor$^R_1(E(R_R)/R,E(_RR))\rightarrow R\otimes_R E(_RR)\rightarrow
E(R_R)\otimes_R E(_RR)\rightarrow E(R_R)/R\otimes_R
E(_RR)\rightarrow 0$. According to flatness of $E(R_R)$, we have
Tor$^R_1(E(R_R),E(_RR))=0$. Also by (2), the purity of $R$ in
$E(R_R)$ yields that Tor$^R_1(E(R_R)/R,E(_RR))=0$ as asserted.
\end{proof}

\begin{prop}  $\mathcal{CMI}$-covers of Harmanci injective
modules  are Harmanci injective.
\end{prop}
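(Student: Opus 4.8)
The plan is to present the $\mathcal{CMI}$-cover as a short exact sequence and then deduce Harmanci injectivity of its source from the two available closure facts: the Wakamatsu-type kernel statement and closure under extensions. Concretely, let $M$ be a Harmanci injective right $R$-module. By Lemma \ref{C-class}(5) it admits a $\mathcal{CMI}$-cover $f\colon F\rightarrow M$ with $F\in\mathcal{CMI}$, and I want to show $F\in\mathcal{HI}$.

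First I would record that $f$ is an epimorphism. This is exactly the observation already used in Theorem \ref{properties}(2): since every projective module lies in $\mathcal{CMI}$, a $\mathcal{CMI}$-cover must be surjective. Hence we obtain a genuine short exact sequence
$$0\rightarrow \mbox{Ker}\,f\rightarrow F\overset{f}\rightarrow M\rightarrow 0.$$
Verifying this epimorphism claim is the only point requiring any care, since the whole argument hinges on $M$ being the cokernel of the inclusion $\mbox{Ker}\,f\hookrightarrow F$; everything else is a direct appeal to earlier results.

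Next I would invoke Lemma \ref{C-class}(6), which asserts precisely that the kernel of every $\mathcal{CMI}$-cover is Harmanci injective; thus $\mbox{Ker}\,f\in\mathcal{HI}$. Combined with the hypothesis that $M$ itself is Harmanci injective, the displayed sequence now exhibits $F$ as an extension of $M$ by $\mbox{Ker}\,f$, with both outer terms in $\mathcal{HI}$.

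Finally, Lemma \ref{basic}(4) states that $\mathcal{HI}$ is closed under extensions, so $F\in\mathcal{HI}$, i.e. the $\mathcal{CMI}$-cover of $M$ is Harmanci injective, as claimed. I do not anticipate a genuine obstacle here: the statement follows by assembling Lemma \ref{C-class}(5), Lemma \ref{C-class}(6), and Lemma \ref{basic}(4), the conceptual content lying entirely in those earlier lemmas rather than in this deduction.
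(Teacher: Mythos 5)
Your proof is correct and follows essentially the same route as the paper: surjectivity of the cover (via projectives lying in $\mathcal{CMI}$, as in Theorem \ref{properties}(2)), then Lemma \ref{C-class}(6) for the kernel, then closure under extensions from Lemma \ref{basic}(4). No gaps; the argument matches the paper's proof step for step.
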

\begin{proof} Let $M$ be a Harmanci injective module and $f\colon C\rightarrow
M$ a $\mathcal{CMI}$-cover of $M$. According to Theorem
\ref{properties}(2), we have the exact sequence $0\rightarrow$
Ker$f\rightarrow C\rightarrow M\rightarrow 0$. Lemma
\ref{C-class}(6) implies that Ker$f$ is Harmanci injective. Hence
Lemma \ref{basic}(4) completes the proof.
\end{proof}

\begin{prop} Let $M$ be a right $R$-module and consider the following conditions.
\begin{enumerate}
    \item[(1)] $M$ is Harmanci injective.
    \item[(2)] For every exact sequence $0\rightarrow M\rightarrow B\overset{f}\rightarrow C \rightarrow
    0$ of right $R$-modules with $B\in \mathcal{CMI}$, $f$ is a $\mathcal{CMI}$-precover of $C$.
    \item[(3)] There exists a $\mathcal{CMI}$-precover $f\colon B\rightarrow
    C$ with $B$ Harmanci injective and Ker$f=M$.
\end{enumerate}
Then {\rm(3)} $\Rightarrow$ {\rm(1)} $\Rightarrow$ {\rm(2)}.
Furthermore if $\mathcal{CMI}$ is closed under homomorphic images,
then all of them are equivalent.
\end{prop}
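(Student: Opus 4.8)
The plan is to establish the two unconditional implications $(3)\Rightarrow(1)\Rightarrow(2)$ by chasing long exact sequences of $\mathrm{Ext}$, and then, under the extra hypothesis, to close the loop via $(2)\Rightarrow(1)$ and $(1)\Rightarrow(3)$. Throughout I would use that $N\in\mathcal{CMI}$ is equivalent to Tor$^R_1(N,E(_RR))=0$ (Lemma \ref{equiv}) together with the definition of Harmanci injectivity. For $(1)\Rightarrow(2)$, fix an exact sequence $0\rightarrow M\rightarrow B\overset{f}\rightarrow C\rightarrow 0$ with $B\in\mathcal{CMI}$ and take any $N\in\mathcal{CMI}$. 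Applying Hom$_R(N,-)$ yields the exact sequence Hom$_R(N,B)\overset{f_*}\rightarrow$ Hom$_R(N,C)\rightarrow$ Ext$^1_R(N,M)$. Since $N\in\mathcal{CMI}$ and $M$ is Harmanci injective, Ext$^1_R(N,M)=0$, so $f_*$ is surjective; this says precisely that every $g\colon N\rightarrow C$ factors through $f$, i.e. $f$ is a $\mathcal{CMI}$-precover of $C$.

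For $(3)\Rightarrow(1)$, let $f\colon B\rightarrow C$ be the given $\mathcal{CMI}$-precover with $B$ Harmanci injective and Ker$f=M$. The point requiring care, and the one I expect to be the main obstacle, is that $f$ need not be surjective, so I would pass to the exact sequence $0\rightarrow M\rightarrow B\rightarrow$ Im$f\rightarrow 0$, where $\bar{f}\colon B\rightarrow$ Im$f$ is the corestriction of $f$. For any $N\in\mathcal{CMI}$, the functor Hom$_R(N,-)$ gives Hom$_R(N,B)\rightarrow$ Hom$_R(N,$Im$f)\rightarrow$ Ext$^1_R(N,M)\rightarrow$ Ext$^1_R(N,B)$, and the last term vanishes because $B$ is Harmanci injective; hence Ext$^1_R(N,M)=0$ will follow once $\bar{f}_*$ is surjective. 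Given $g\colon N\rightarrow$ Im$f$, I would compose with the inclusion $\iota\colon$ Im$f\rightarrow C$ and invoke the precover property of $f$ on the map $\iota g$ to obtain $h\colon N\rightarrow B$ with $fh=\iota g$; since $\iota$ is monic this forces $\bar{f}h=g$, so $\bar{f}_*$ is onto and $M$ is Harmanci injective.

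Now suppose $\mathcal{CMI}$ is closed under homomorphic images. The key preliminary observation is that Theorem \ref{hom image} then yields $\mathcal{HI}\subseteq\mathcal{CMI}$; in particular every injective module lies in $\mathcal{CMI}$, so for each $M$ the injective envelope $E(M)$ belongs to $\mathcal{CMI}$. Consequently the canonical sequence $0\rightarrow M\rightarrow E(M)\overset{f}\rightarrow E(M)/M\rightarrow 0$ has its middle term in $\mathcal{CMI}$, and $E(M)$ is moreover Harmanci injective. For $(2)\Rightarrow(1)$ I would feed this sequence into $(2)$ to learn that $f$ is a $\mathcal{CMI}$-precover; then for $N\in\mathcal{CMI}$ the surjectivity of $f_*$ together with Ext$^1_R(N,E(M))=0$ (injectivity of $E(M)$) forces Ext$^1_R(N,M)=0$, so $M\in\mathcal{HI}$. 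For $(1)\Rightarrow(3)$ the same sequence already exhibits the data of $(3)$: the module $E(M)$ is Harmanci injective, Ker$f=M$, and $(1)\Rightarrow(2)$ shows $f$ is the required $\mathcal{CMI}$-precover. Combined with the unconditional chain $(3)\Rightarrow(1)\Rightarrow(2)$, these close the cycles $(1)\Leftrightarrow(2)$ and $(1)\Leftrightarrow(3)$, giving the equivalence of all three; beyond the corestriction bookkeeping in $(3)\Rightarrow(1)$, everything is routine homological diagram chasing.
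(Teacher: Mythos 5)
Your proposal is correct, and for the unconditional part it matches the paper: your $(1)\Rightarrow(2)$ is the same Ext-chase the paper delegates to Proposition \ref{exact}, and your $(3)\Rightarrow(1)$ is the paper's argument with one refinement --- you do not assume the precover $f$ is surjective and instead corestrict to $\mathrm{Im}\,f$. The paper silently writes $0\rightarrow M\rightarrow B\overset{f}\rightarrow C\rightarrow 0$ as exact; this is harmless because free modules lie in $\mathcal{CMI}$ (their character modules are injective), so every $\mathcal{CMI}$-precover is automatically epic, a fact the paper records in the proof of Theorem \ref{properties}(2). Your bookkeeping makes the step self-contained, which is a small plus. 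Where you genuinely diverge is in closing the cycle under the hypothesis that $\mathcal{CMI}$ is closed under homomorphic images: the paper proves $(2)\Rightarrow(3)$ directly, taking the Harmanci injective envelope $HIE(M)$ and observing via Theorem \ref{hom image} that $HIE(M)\in\mathcal{CMI}$, so that $(2)$ makes the projection $HIE(M)\rightarrow HIE(M)/M$ the required precover; you instead use the ordinary injective envelope $E(M)$ (which is Harmanci injective, hence in $\mathcal{CMI}$ by the same Theorem \ref{hom image}) and prove $(2)\Rightarrow(1)$ and $(1)\Rightarrow(3)$ from the sequence $0\rightarrow M\rightarrow E(M)\rightarrow E(M)/M\rightarrow 0$. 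Your route has the advantage of resting only on classical injective envelopes, avoiding any appeal to the existence of $\mathcal{HI}$-envelopes (Theorem \ref{properties}(3), which depends on the cotorsion-theory machinery); the paper's route is shorter once that machinery is in place. Both correctly yield the equivalence of (1), (2), (3).
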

\begin{proof} (3) $\Rightarrow$ (1) Let $N\in \mathcal{CMI}$ and consider the
exact sequence $0\rightarrow M\rightarrow B\overset{f}\rightarrow
C \rightarrow
    0$  where $f$ is a $\mathcal{CMI}$-precover and $B$ is Harmanci injective by (3).
Applying the functor Hom$_R(N,-)$, we behold the exactness of
Hom$_R(N,B)\rightarrow$ Hom$_R(N,C)\rightarrow $
Ext$^1_R(N,M)\rightarrow $ Ext$^1_R(N,B)\rightarrow \cdots$. The
homomorphism $f$ being a $\mathcal{CMI}$-precover and Harmanci
injectivity of $B$ imply Ext$^1_R(N,M)=0$.\\
(1) $\Rightarrow$ (2) Obvious by Proposition \ref{exact}.

Now assume that the class $\mathcal{CMI}$ is closed under
homomorphic images.\\
(2) $\Rightarrow$ (3) The natural projection $HIE(M)\rightarrow
HIE(M)/M$ is the required $\mathcal{CMI}$-precover due to Theorem
\ref{hom image}.
\end{proof}

We end this paper by investigating the unique mapping property of
$\mathcal{HI}$-envelopes.

\begin{thm}\label{unique} The following are equivalent for a ring $R$.
\begin{enumerate}
    \item[(1)] Every right $R$-module has an
    $\mathcal{HI}$-envelope with the unique mapping property.
    \item[(2)] For every exact sequence
    $0\rightarrow A \rightarrow B\rightarrow C$ of right
    $R$-modules, being $B, C\in \mathcal{HI}$ implies $A\in
    \mathcal{HI}$.
\end{enumerate}
\end{thm}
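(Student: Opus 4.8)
The plan is to treat this statement as the envelope-side dual of Lemma \ref{C-class}(7), relying on two ingredients already available: the existence of $\mathcal{HI}$-envelopes guaranteed by Theorem \ref{properties}(3), and the closure of $\mathcal{HI}$ under direct summands from Lemma \ref{basic}(3). The unique mapping property will be the tool that converts a factorization problem into a rigidity statement about endomorphisms of the envelope.

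For (1) $\Rightarrow$ (2), I would start with an exact sequence $0\rightarrow A\xrightarrow{f} B\xrightarrow{g} C$ where $B,C\in\mathcal{HI}$, and take an $\mathcal{HI}$-envelope $e\colon A\rightarrow E$ with the unique mapping property. Since $B\in\mathcal{HI}$, the preenvelope property yields $\alpha\colon E\rightarrow B$ with $\alpha e=f$. Exactness at $B$ gives $gf=0$, so the two maps $g\alpha$ and $0$ from $E$ into $C\in\mathcal{HI}$ both compose with $e$ to give $gf=0$; the unique mapping property then forces $g\alpha=0$. Hence $\operatorname{Im}\alpha\subseteq\operatorname{Ker}g=\operatorname{Im}f$, and since $f$ is monic, $\alpha$ factors as $\alpha=f\beta$ for some $\beta\colon E\rightarrow A$. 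From $f\beta e=\alpha e=f$ and the cancellation of the monomorphism $f$ I obtain $\beta e=1_A$, so $A$ is a direct summand of $E\in\mathcal{HI}$, whence $A\in\mathcal{HI}$ by Lemma \ref{basic}(3).

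For (2) $\Rightarrow$ (1), existence of an $\mathcal{HI}$-envelope $e\colon M\rightarrow E$ for any module $M$ comes from Theorem \ref{properties}(3), so only the unique mapping property needs checking. Suppose $g\colon M\rightarrow G$ with $G\in\mathcal{HI}$ admits two lifts $\psi_1,\psi_2\colon E\rightarrow G$ with $\psi_1 e=\psi_2 e=g$, and set $\psi=\psi_1-\psi_2$, so that $\psi e=0$. Writing $K=\operatorname{Ker}\psi$ with inclusion $\iota\colon K\rightarrow E$, the sequence $0\rightarrow K\xrightarrow{\iota} E\xrightarrow{\psi} G$ is exact, so hypothesis (2) gives $K\in\mathcal{HI}$. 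Because $\operatorname{Im}e\subseteq K$, the map $e$ factors as $e=\iota e'$ with $e'\colon M\rightarrow K$, and the preenvelope property applied to $K\in\mathcal{HI}$ produces $\gamma\colon E\rightarrow K$ with $\gamma e=e'$, whence $(\iota\gamma)e=\iota e'=e$. The envelope condition forces the endomorphism $\iota\gamma$ of $E$ to be an isomorphism; since $\operatorname{Im}(\iota\gamma)\subseteq K$, this yields $K=E$, so $\psi=0$ and $\psi_1=\psi_2$.

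The main obstacle I anticipate is keeping the dualization bookkeeping straight: one must carefully distinguish the mere preenvelope property (used to lift maps along $e$) from the genuine envelope condition and the unique mapping property. Concretely, in (1) $\Rightarrow$ (2) the decisive move is recognizing that the two competing maps into $C$ are $g\alpha$ and the zero map, so that uniqueness applies; in (2) $\Rightarrow$ (1) the closure hypothesis is exactly what places $\operatorname{Ker}\psi$ in $\mathcal{HI}$, enabling the rigidity of the envelope to collapse $K$ onto $E$. Matching the exactness conventions of the stated sequence $0\rightarrow A\rightarrow B\rightarrow C$ (monic at $A$, image equal to kernel at $B$, with no epimorphism demanded at $C$) to the kernel construction $0\rightarrow K\rightarrow E\rightarrow G$ is the delicate point that makes both directions go through.
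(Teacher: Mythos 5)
Your proof is correct and follows essentially the same route as the paper: in (1) $\Rightarrow$ (2) you lift $f$ through the envelope, use the unique mapping property on $g\alpha$ versus $0$ to split $A$ off as a direct summand, and invoke Lemma \ref{basic}(3); in (2) $\Rightarrow$ (1) you form the kernel of the difference of two lifts, place it in $\mathcal{HI}$ by hypothesis, and use the envelope's rigidity to force the kernel to be all of $E$. Your bookkeeping with the endomorphism $\iota\gamma$ is, if anything, slightly cleaner than the paper's phrasing, but the argument is the same.
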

\begin{proof} (1) $\Rightarrow$ (2) Let $0\rightarrow A \overset{f}\rightarrow B \overset{g}\rightarrow C$ be  an exact sequence of right
$R$-modules with $B, C\in \mathcal{HI}$ and $h\colon A\rightarrow
D$ an $\mathcal{HI}$-envelope of $A$. Since $B$ is Harmanci
injective, there exists a unique homomorphism $\alpha\colon
D\rightarrow B$ such that $\alpha h= f$. Consider the following
diagram:
\begin{center}
$\xymatrix{ &D\ar@{.>}[dr]^\alpha \ar@{.>}@/_/[d]_\beta&&\\
0\ar[r]& A \ar[r]_f \ar[u]_h &B\ar[r]_g & C\\
&0\ar[u]&&}$
\end{center}
The equality $gf=0$ implies $g\alpha h=0=0h$, and so $g\alpha=0$
due to the unique mapping property of $h$. Then Im$\alpha\subseteq
$ Ker$g=$ Im$f$. By Factor Theorem, there exists a homomorphism
$\beta\colon D\rightarrow A$ with $f\beta=\alpha$. Hence $f\beta
h=\alpha h=f=f1_A$. Since $f$ is monic, $\beta h=1_A$, this yields
$D=$ Ker$\beta\oplus $ Im$h$. By Theorem \ref{properties}(3), it
is known that $h$ is monic, so $A\cong$ Im$h$. Thus this
isomorphism and
Lemma \ref{basic}(3) imply that $A$ is Harmanci injective.\\
(2) $\Rightarrow$ (1) Let $M$ be a right $R$-module and
$0\rightarrow M\overset{f}\rightarrow F$ an
$\mathcal{HI}$-envelope of $M$. Assume that for any $G\in
\mathcal{HI}$ and any homomorphism $g\colon M\rightarrow G$, there
exist $h_1, h_2 \colon F\rightarrow G$ such that $h_1f=h_2f=g$.
Since $(h_1-h_2)f=0$, Im$f\subseteq $ Ker$(h_1-h_2)$. Consider the
exact sequence $0\rightarrow $ Ker$(h_1-h_2)\rightarrow
F\overset{h_1-h_2}\rightarrow G$. By (2), Ker$(h_1-h_2)$ is
Harmanci injective. Then there exists a homomorphism $\beta\colon
F\rightarrow $ Ker$(h_1-h_2)\subseteq F$ satisfying $\beta f=if=f$
where $i\colon$ Im$f\rightarrow $ Ker$(h_1-h_2)$ is inclusion.
Since $f\colon M\rightarrow F$ is an $\mathcal{HI}$-envelope,
$\beta$ is an isomorphism. Hence $\beta(F)=$ Ker$(h_1-h_2)$.  This
yields $(h_1-h_2)\beta=0$. As $\beta$ has an inverse, $h_1-h_2=0$,
and so $h_1=h_2$. Therefore $f$ has the unique mapping property.
\end{proof}

By Theorem \ref{unique}, we immediately get the next result.

\begin{cor} Let $M$ be a Harmanci injective module and $N$ a submodule of $M$ with
$M/N$ Harmanci injective.  If the $\mathcal{HI}$-envelope of $N$
satisfies the unique mapping property, then $N$ is also Harmanci
injective.
\end{cor}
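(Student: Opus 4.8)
The plan is to specialize the argument of Theorem~\ref{unique}, implication (1)~$\Rightarrow$~(2), to the single module $N$. First I would form the short exact sequence $0\to N\to M\to M/N\to 0$ and discard the trailing zero, obtaining the exact sequence $0\to N\overset{f}\to M\overset{g}\to M/N$ with $M,M/N\in\mathcal{HI}$. This is precisely the configuration of condition~(2) of Theorem~\ref{unique} with $A=N$, $B=M$ and $C=M/N$, so it suffices to reproduce that implication in this instance.

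The key observation is that the proof of (1)~$\Rightarrow$~(2) never invokes the full strength of hypothesis~(1): it uses only that the $\mathcal{HI}$-envelope of the left-hand term $A$ enjoys the unique mapping property. Since $\mathcal{HI}$ is an enveloping class by Theorem~\ref{properties}(3), the module $N$ does possess an $\mathcal{HI}$-envelope $h\colon N\to D$, and by hypothesis this particular envelope has the unique mapping property. Hence the entire argument goes through verbatim with $A=N$.

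Concretely, I would run it as follows. The preenvelope property of $h$ produces $\alpha\colon D\to M$ with $\alpha h=f$. Since the sequence is exact we have $gf=0$, so both $g\alpha$ and the zero map $D\to M/N$ precompose with $h$ to yield the zero homomorphism $N\to M/N$; as $M/N\in\mathcal{HI}$, the unique mapping property of $h$ forces $g\alpha=0$. Thus $\operatorname{Im}\alpha\subseteq\operatorname{Ker}g=\operatorname{Im}f$, and factoring through the monomorphism $f$ gives $\beta\colon D\to N$ with $f\beta=\alpha$. Then $f\beta h=\alpha h=f=f\,1_N$ together with the injectivity of $f$ forces $\beta h=1_N$, so $h$ splits and $D=\operatorname{Ker}\beta\oplus\operatorname{Im}h$. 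Consequently $N\cong\operatorname{Im}h$ is a direct summand of $D\in\mathcal{HI}$, and Lemma~\ref{basic}(3) then yields $N\in\mathcal{HI}$.

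The only real obstacle is that Theorem~\ref{unique} is stated as a global equivalence whose condition~(1) quantifies over \emph{all} modules, so it cannot be cited as a black box here, where the unique mapping property is assumed for $N$ alone. The substantive step is therefore to verify that the proof of (1)~$\Rightarrow$~(2) uses no instance of the unique mapping property other than the one attached to the envelope of the left-hand term $A$; by the inspection above it does not, which is exactly what makes the localized hypothesis on $N$ sufficient.
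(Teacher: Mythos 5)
Your proof is correct and follows essentially the same route as the paper: the paper deduces the corollary directly from Theorem~\ref{unique}, and your argument is precisely the proof of implication (1)~$\Rightarrow$~(2) of that theorem specialized to $A=N$, $B=M$, $C=M/N$. Your observation that the argument only ever uses the unique mapping property of the envelope of the left-hand term (not the global hypothesis of Theorem~\ref{unique}(1)) is exactly the justification implicit in the paper's ``immediately.''
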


\end{document}